\newcommand{\bA}{\mathbf A}
\newcommand{\bB}{\mathbf B}
\newcommand{\bD}{\mathbf D}
\newcommand{\bE}{\mathbf E}
\newcommand{\bF}{\mathbf F}
\newcommand{\bG}{\mathbf G}
\newcommand{\bH}{\mathbf H}
\newcommand{\bJ}{\mathbf J}
\newcommand{\bK}{\mathbf K}
\newcommand{\bL}{\mathbf L}
\newcommand{\bLambda}{\mathbf \Lambda}
\newcommand{\bP}{\mathbf P}
\newcommand{\bPi}{\mathbf \Pi}
\newcommand{\bQ}{\mathbf Q}
\newcommand{\bR}{\mathbf R}
\newcommand{\bS}{\mathbf S}
\newcommand{\bSigma}{\mathbf \Sigma}
\newcommand{\bT}{\mathbf T}
\newcommand{\bU}{\mathbf U}
\newcommand{\bV}{\mathbf V}
\newcommand{\bY}{\mathbf Y}
\newcommand{\ahess}{\bA_{+}}
\newcommand{\bxi}{\boldsymbol \xi}
\newcommand{\bb}{\mathbf b}
\newcommand{\bc}{\mathbf c}
\newcommand{\be}{\mathbf e}
\newcommand{\bg}{\mathbf g}
\newcommand{\bfr}{\mathbf r} 
\newcommand{\bs}{\mathbf s}
\newcommand{\bx}{\mathbf x}
\newcommand{\by}{\mathbf y}
\newcommand{\bz}{\mathbf z}
\newcommand{\calo}{\mathcal O}
\newcommand{\cals}{\mathcal S}
\newcommand{\caly}{\mathcal Y}
\newcommand{\calv}{\mathcal V}
\newcommand{\calw}{\mathcal W}
\newcommand{\tr}{\mathop{\mbox{tr}}}
\newcommand{\ds}{\displaystyle}
\newcommand{\wh}{\widehat}
\newcommand{\wt}{\widetilde}
\newcommand{\ph}{\phantom}
\DeclareMathOperator*{\argmin}{argmin}
\DeclareMathOperator*{\sym}{sym}
\newcommand{\step}[2]{\beta_{#2}^{\rm{#1}}}
\newcommand{\invstep}[2]{\alpha_{#2}^{\rm{#1}}} 
\newcommand{\extmat}[2]{[\,{#1} \ \ {#2}\,]}
\newcommand{\abbmin}{\text{ABB}_{\text{min}}}
\newcommand{\abbbon}{\text{ABB}_{\text{bon}}}
\theoremstyle{thmstyleone}%
\newtheorem{theorem}{Theorem}
\newtheorem{proposition}[theorem]{Proposition}%
\theoremstyle{thmstyletwo}%
\begin{document}

\title{Limited memory gradient methods for unconstrained optimization}

\author*[1]{\fnm{Giulia} \sur{Ferrandi}}\email{g.ferrandi@tue.nl}
\author[1]{\fnm{Michiel E.} \sur{Hochstenbach}}\email{m.e.hochstenbach@tue.nl}

\affil[1]{\orgdiv{Department of Mathematics and Computer Science}, \orgname{TU Eindhoven}, \orgaddress{\street{PO Box 513}, \city{Eindhoven}, \postcode{5600 MB}, \country{The Netherlands}}}

\abstract{
The limited memory steepest descent method (LMSD, Fletcher, 2012) for unconstrained optimization problems stores a few past gradients to compute multiple stepsizes at once.
We review this method and propose new variants. For strictly convex quadratic objective functions, we study the numerical behavior of different techniques to compute new stepsizes. In particular, we introduce a method to improve the use of harmonic Ritz values. We also show the existence of a secant condition associated with LMSD, where the approximating Hessian is projected onto a low-dimensional space.
In the general nonlinear case, we propose two new alternatives to Fletcher's method: first, the addition of symmetry constraints to the secant condition valid for the quadratic case; second, a perturbation of the last differences between consecutive gradients, to satisfy multiple secant equations simultaneously. We show that Fletcher's method can also be interpreted from this viewpoint.
}

\keywords{
limited memory steepest descent, 
unconstrained optimization, 
secant condition, 
low-dimensional Hessian approximation, 
Rayleigh--Ritz extraction,
Lyapunov equation.
}

\pacs[AMS Classification]{65K05, 90C20, 90C30, 65F15, 65F10}

\maketitle

\section{Introduction}
\label{sec:lmsd-intro}
We study the limited memory steepest descent method (LMSD), introduced by Fletcher \cite{fletcher2012limited}, in the context of unconstrained optimization problems for a continuously differentiable function $f$:
\[
\min_{\bx \in \mathbb{R}^n} f(\bx).
\]
The iteration for a steepest descent scheme reads
\begin{equation*} 
\bx_{k+1} = \bx_k - \beta_k \, \bg_k \ = \ \bx_k - \alpha_k^{-1} \, \bg_k,
\end{equation*}
where $\bg_k = \nabla f(\bx_k)$ is the gradient, $\beta_k>0$ is the steplength, and its inverse $\alpha_k = \beta_k^{-1}$ is usually chosen as an approximate eigenvalue of an (average) Hessian.
We refer to \cite{daniela2018steplength, zou2022delayed} for recent reviews on various steplength selection procedures.

The key idea of LMSD is to store the latest $m > 1$ gradients, and to compute (at most) $m$ new stepsizes for the following iterations of the gradient method. We first consider the strictly convex quadratic problem
\begin{equation} \label{quad}
\min_{\bx\in\mathbb{R}^n} \ \tfrac12 \, \bx^T\!\bA\bx -\bb^T\bx
\end{equation}
where $\bA$ is a symmetric positive definite (SPD) matrix with eigenvalues $0 < \lambda_1 \le \dots \le \lambda_n$, and $\bb \in \mathbb{R}^n$. Fletcher points out that the $m$ most recent gradients $\bG = [\, \bg_1 \ \dots \ \bg_m\,]$ form a basis for an $m$-dimensional Krylov subspace of $\bA$. (Although $\bG$ will change during the iterations, for convenience, and without loss of generality, we label the first column as $\bg_1$.)
Then, $m$ approximate eigenvalues of $\bA$ (Ritz values) are computed from the low-dimensional representation of $\bA$, a projected Hessian matrix, in the subspace spanned by the columns of $\bG$, and used as $m$ inverse stepsizes. For $m = 1$, the proposed method reduces to the steepest descent method with Barzilai--Borwein stepsizes \cite{bb1988}. 

LMSD shares the property with L-BFGS (see, e.g., \cite[Ch.~7]{nocedal2006numerical}),  the limited memory version of BFGS, that $2m$ past vectors are stored, of the form $\bs_{k-1} = \bx_k-\bx_{k-1}$ and $\by_{k-1} = \bg_k-\bg_{k-1}$. While LMSD is a first-order method which incorporates some second-order information in its simplest form (the stepsize), L-BFGS is a quasi-Newton method, which exploits the $\bs$-vectors and $\by$-vectors to provide an additive rank-$m$ update of a tentative approximate inverse Hessian (typically a multiple of the identity matrix). Compared to BFGS, at each iteration, the L-BFGS method computes the action of the approximate inverse Hessian, without storing the entire matrix and using $\calo(mn)$ operations (see, e.g., \cite[Ch.~7]{nocedal2006numerical}). As we will see in Section~\ref{sec:lmsd-exp}, the cost of $m$ LMSD iterations is approximately $\calo(m^2n)$, meaning that the costs of the two algorithms are comparable. 

There are several potential benefits of LMSD. First, as shown in \cite{fletcher2012limited}, there are some problems for which LMSD performs better than L-BFGS. Secondly, to the best of our knowledge and as stated in \cite[Sec.~6.4]{nocedal2006numerical}, there are no global convergence results for quasi-Newton methods applied to non-convex functions. Liu and Nocedal \cite{liu1989limited} have proved the global superlinear convergence of L-BFGS only for (twice continuously differentiable) uniformly convex functions. On the contrary, as a gradient method endowed with line search, LMSD converges globally for continuously differentiable functions (see \cite[Thm.~2.1]{raydan1997barzilai}, for the convergence of gradient methods combined with nonmonotone line search).
Finally, and quite importantly, we note that the idea of LMSD can be readily extended to other types of problems: to name a few, it has been used in the scaled spectral projected gradient method \cite{porta2015} for constrained optimization problems, in a stochastic gradient method \cite{franchini2020ritz}, and, more recently, in a projected gradient method for box-constrained optimization \cite{crisci2023hybrid}.




{\bf Summary of the state of the art and our contributions.} In the quadratic case \eqref{quad}, the projected Hessian matrix can be computed from the Cholesky decomposition of $\bG^T\bG$ (cf.~\cite[Eq.~(19)]{fletcher2012limited} and Section~\ref{sec:eig}) without involving any extra matrix-vector product with $\bA$. Although this procedure is memory and time efficient, it is also known to be potentially numerically unstable (cf., e.g., the discussion in \cite{FKN20}) because of the computation of the Gramian matrix $\bG^T\bG$, especially in our context of having an ill-conditioned $\bG$. 
Therefore, we consider alternative ways to obtain the projected Hessian in Section~\ref{sec:ritz}; in particular, we propose to use the pivoted QR decomposition of $\bG$ (see, e.g., \cite[Algorithm~1]{gu1996efficient}), or its SVD, and compare the three methods. 

In addition, we show that, in the quadratic case, there is a least squares secant condition associated with LMSD.
Indeed, in Section~\ref{sec:sec} we prove that the projected Hessian, obtained via one of these three decompositions, is similar to the solution to $\min_\bB \, \|\bY - \bS\bB\|$, where $\|\cdot\|$ denotes the Frobenius norm of a matrix, and $\bS = [\, \bs_1 \ \dots \ \bs_m\,]$ and $\bY = [\, \by_1 \ \dots \ \by_m\,]$ store the $m$ most recent $\bs$-vectors and $\by$-vectors, respectively.

Since $\bY = \bA\bS$ for quadratic functions, the obtained stepsizes are inverse eigenvalues of a projection of the Hessian matrix $\bA$.
In the general nonlinear case (i.e., for a non-quadratic function $f$), one can still reproduce the small matrix in \cite[Eq.~(19)]{fletcher2012limited}, since the Hessian is not needed explicitly in its computation.
However, there is generally not a clear interpretation of the stepsizes as approximate inverse eigenvalues of a certain Hessian matrix. Also, the obtained eigenvalues might even be complex.

To deal with this latter problem, Fletcher proposes a practical symmetrization of \cite[Eq.~(19)]{fletcher2012limited}, but, so far, a clear \emph{theoretical justification} for this approach seems to be lacking.
To address this issue, we rely on Schnabel's theorem \cite[Thm.~3.1]{schnabel1983quasi} to connect Fletcher's symmetrization to a perturbation of the $\bY$ matrix, of the form $\wt\bY = \bY + \Delta \bY$. This guarantees that the eigenvalues of the symmetrized matrix \cite[Eq.~(19)]{fletcher2012limited} correspond to a certain symmetric matrix $\bA_+$ that satisfies multiple secant equations $\wt\bY = \bA_+ \bS$ as in the quadratic case. The matrix $\bA_+$ can be interpreted as an approximate Hessian in the current iterate.

In the same line of thought, we also exploit one of the perturbations $\wt\bY$ proposed by Schnabel \cite{schnabel1983quasi} in the LMSD context. Although the idea of testing different perturbations of $\bY$ is appealing, a good perturbation may be expensive to compute, compared to the task of getting $m$ new stepsizes. Therefore, we explore a different approach based on the modification of the least squares secant condition of LMSD. The key idea is to add a {\em symmetry constraint} to the secant condition:
\[
\min_{\bB = \bB^T} \|\bY - \bS\bB\|. 
\]
Interestingly, the solution to this problem corresponds to the solution of a \emph{Lyapunov equation} (see, e.g., \cite{simoncini2016computational}). 
This secant condition provides a smooth transition from the strictly convex quadratic case to the general case, and its solution has real eigenvalues by construction. 

Along with discussing both the quadratic and the general case, we study the computation of \emph{harmonic Ritz values}, which are also considered by Fletcher \cite{fletcher2012limited} and Curtis and Guo \cite{curtis2016handling, curtis2018linear}.
For the quadratic case, in Section~\ref{sec:harm}, we show that there are some nice symmetries between the computation of the Ritz values of $\bA$ by exploiting a basis for the matrix of gradients $\bG$, and the computation of the {\em inverse} harmonic Ritz values of $\bA$ by means of $\bY$. Our implementation is different from Fletcher's, but the two approaches show similar performance in the quadratic experiments of Section~\ref{sec:lmsd-expquad}. In general, LMSD with harmonic Ritz values appears to show a less favorable behavior than LMSD with Ritz values. Therefore, in Section~\ref{sec:harm}, we present a way to improve the quality of the harmonic Ritz values, by taking an extra Rayleigh quotient of the harmonic Ritz vectors. This is based on the remarks in, e.g., \cite{morgan1991computing,sleijpen2003use}.

{\bf Outline.} The rest of the paper is organized as follows.
We first focus on the strictly convex quadratic problem \eqref{quad} in Section~\ref{sec:eig}. We review the LMSD method, as described by Fletcher \cite{fletcher2012limited}, and present new ways to compute the approximate eigenvalues of the Hessian. We also give a secant condition for the low-dimensional Hessian of which we compute the eigenvalues. We move to the general unconstrained optimization problems in Section~\ref{sec:genf}, where we give a theoretical foundation to Fletcher's symmetrized matrix \cite[Eq.~(19)]{fletcher2012limited}, and show how to compute new stepsizes from the secant equation for quadratics, by adding symmetry constraints. A third new approach based on \cite{schnabel1983quasi} is also proposed.
In both Sections~\ref{sec:eig} and \ref{sec:genf}, particular emphasis is put on the issue of (likely) numerical rank-deficiency of $\bG$ (or $\bY$, when computing the harmonic Ritz values). Sections~\ref{sec:algo-quad} and \ref{sec:algo-general} report the LMSD algorithms for strictly convex quadratic problems, as in \cite{fletcher2012limited}, and for general continuously differentiable functions, as in \cite{daniela2018steplength}.
Related convergence results are also recalled. Finally, numerical experiments on both strictly convex quadratics and general unconstrained problems are presented in Section~\ref{sec:lmsd-exp}; conclusions are drawn in Section~\ref{sec:con}.

Throughout the paper, the Frobenius norm of a matrix is denoted by $\|\cdot\|$. The eigenvalues of a symmetric matrix $\bA$ are ordered increasingly $\lambda_1\le\dots\le\lambda_n$, while its singular values are ordered decreasingly $\sigma_1\ge\dots\ge\sigma_n$.

\section{Limited memory BB1 and BB2 for quadratic problems} \label{sec:eig}
We review Fletcher's limited memory approach \cite{fletcher2012limited} for strictly convex quadratic functions \eqref{quad}, and study some new theoretical and computational aspects.
Common choices for the steplength in gradient methods for quadratic functions are the Barzilai--Borwein (BB) stepsizes \cite{bb1988}
\begin{equation} \label{eq:bbsteps}
\step{BB1}{k} = \frac{\bg_{k-1}^T \bg_{k-1}}{\bg_{k-1}^T \bA\, \bg_{k-1}}, \qquad
\step{BB2}{k} = \frac{\bg_{k-1}^T \bA\, \bg_{k-1}}{\bg_{k-1}^T \bA^2\, \bg_{k-1}}.
\end{equation}
The inverse stepsizes $\invstep{BB1}{k} = (\step{BB1}{k})^{-1}$ and $\invstep{BB2}{k} = (\step{BB2}{k})^{-1}$ are the standard and the harmonic Rayleigh quotients of $\bA$, evaluated at $\bg_{k-1}$, respectively. Therefore, they provide estimates of the eigenvalues of $\bA$.
The key idea of LMSD is to produce $m > 1$ approximate eigenvalues from an $m$-dimensional space simultaneously, hopefully capturing more information compared to that from a one-dimensional space. One hint about why considering $m > 1$ may be favorable is provided by the well-known Courant--Fischer Theorem and Cauchy's Interlace Theorem (see, e.g., \cite[Thms.~10.2.1 and 10.1.1]{Par98}). For two subspaces $\calv$, $\calw$ with $\calv \subseteq \calw$, we have
\[
\max_{\bz \in \calv, \, \|\bz\|=1} \bz^T\!\bA\bz \le \max_{\bz \in \calw, \, \|\bz\|=1} \bz^T\!\bA\bz \le
\max_{\|\bz\|=1} \bz^T\!\bA\bz = \lambda_n.
\]
Therefore, a larger search space may result in better approximations to the largest eigenvalue of $\bA$.
Similarly, a larger subspace may better approximate the smallest eigenvalue, as well as the next-largest and the next-smallest values. 

We now show why $m$ consecutive gradients form a basis of a Krylov subspace of $\bA$. It is easy to check that, given the stepsizes $\beta_1,\dots,\beta_{m}$ corresponding to the $m$ most recent gradients, each gradient can be expressed as follows:
\begin{equation} \label{eq:recursivegrad}
\bg_k = \prod_{i=1}^{k-1} \, (I - \beta_i\bA)\, \bg_1, \quad k = 1, \dots, m.
\end{equation}
Therefore all $m$ gradients belong to the Krylov subspace of degree $m$ (and of dimension at most $m$)
\[
\bg_k \in \mathcal{K}_m(\bA,\bg_1) = \text{span}\{\bg_1, \, \bA\, \bg_1, \, \dots, \, \bA^{m-1}\bg_1\}.
\]
Moreover, under mild assumptions, the columns of $\bG$ form a basis for $\mathcal{K}_m(\bA,\bg_1)$. This result is mentioned by Fletcher \cite{fletcher2012limited}; here we provide an explicit proof.

\begin{proposition} \label{prop:li}
Suppose the gradient $\bg_1$ does not lie in an $\ell$-dimensional invariant subspace, with $\ell < m$, of the SPD matrix $\bA$. 
If $\beta_k \ne 0$ for all $k = 1,\dots,m-1$, the vectors $\bg_1,\dots,\bg_m$ are linearly independent.
\end{proposition}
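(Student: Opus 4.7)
The plan is to first rewrite the problem in terms of the Krylov subspace $\mathcal{K}_m(\bA, \bg_1)$, and then to relate a possible deficiency in its dimension to the existence of a small $\bA$-invariant subspace containing $\bg_1$, which would contradict the hypothesis.

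Starting from the identity $\bg_k = \prod_{i=1}^{k-1}(I - \beta_i\bA)\,\bg_1$ in \eqref{eq:recursivegrad}, I would write $\bg_k = p_{k-1}(\bA)\,\bg_1$ with
\[
p_{k-1}(t) \ = \ \prod_{i=1}^{k-1}(1 - \beta_i\, t).
\]
Because $\beta_i \ne 0$ for all $i = 1,\dots,m-1$ by hypothesis, the leading coefficient $(-1)^{k-1}\prod_{i=1}^{k-1}\beta_i$ of $p_{k-1}$ is nonzero, so $\deg p_{k-1} = k-1$ exactly. The family $\{p_0, p_1, \dots, p_{m-1}\}$ then contains exactly one polynomial of each degree $0, 1, \dots, m-1$, and so is a basis for the space of polynomials of degree at most $m-1$. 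This yields
\[
\text{span}\{\bg_1, \dots, \bg_m\} \ = \ \text{span}\{\bg_1, \bA\,\bg_1, \dots, \bA^{m-1}\bg_1\} \ = \ \mathcal{K}_m(\bA, \bg_1),
\]
so the claim reduces to showing $\dim \mathcal{K}_m(\bA, \bg_1) = m$.

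I would conclude by contradiction. Let $\ell$ be the smallest integer such that $\bA^\ell \bg_1$ lies in $\text{span}\{\bg_1, \bA\,\bg_1, \dots, \bA^{\ell-1}\bg_1\}$; a deficient dimension of $\mathcal{K}_m(\bA, \bg_1)$ would force $\ell < m$. By minimality of $\ell$, the vectors $\bg_1, \bA\,\bg_1, \dots, \bA^{\ell-1}\bg_1$ are linearly independent, so the subspace $\mathcal{K}_\ell(\bA, \bg_1)$ has dimension exactly $\ell$, contains $\bg_1$, and is $\bA$-invariant (since $\bA$ sends each $\bA^i\bg_1$ with $i \le \ell-2$ to the next basis vector, while $\bA^{\ell-1}\bg_1$ is sent into the subspace by the defining property of $\ell$). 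This produces an $\bA$-invariant subspace of dimension $\ell < m$ containing $\bg_1$, contradicting the standing assumption.

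The main obstacle is essentially bookkeeping: the nonvanishing-stepsize hypothesis is precisely what forces each $p_{k-1}$ to attain its full degree $k-1$, and this in turn drives the equality of spans. The SPD property of $\bA$ is not actually needed in this argument; only the standard fact that a Krylov sequence, once it fails to produce a new independent vector, closes into an $\bA$-invariant subspace.
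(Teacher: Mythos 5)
Your proof is correct and follows essentially the same route as the paper: the paper writes the relation $\bg_k = \prod_{i<k}(I-\beta_i\bA)\bg_1$ as a triangular change-of-basis matrix with diagonal $1, -\beta_1, \beta_1\beta_2, \dots$ and invokes its nonzero determinant, which is exactly your observation that each polynomial $p_{k-1}$ attains exact degree $k-1$. You additionally spell out why the hypothesis forces $\dim\mathcal{K}_m(\bA,\bg_1)=m$ (the Krylov sequence closing into an invariant subspace), a step the paper asserts without proof, and your remark that symmetry/positive-definiteness of $\bA$ is not needed is accurate.
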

\begin{proof}
In view of the assumption, the set $\{\bg_1,A\, \bg_1\dots, A^{m-1}\bg_1\}$ is a basis for $\mathcal{K}_m(A,\bg_1)$. In fact, from \eqref{eq:recursivegrad},
\begin{equation}
\label{eq:li}
[\, \bg_1\ \ \bg_2\ \dots\ \ \bg_m\,] = [\, \bg_1\ \, A\bg_1\ \dots\ \, A^{m-1}\bg_1]
\begin{bmatrix}
1& \times & \times & \times & \times \\
 & -\beta_1& \times & \times & \times \\
 & &\beta_1\beta_2& \times & \times \\
 & & & \ddots & \times \\
 & & & &(-1)^{m}\prod_{i=1}^{m-1}\beta_i\\
\end{bmatrix}.
\end{equation}
Up to a sign, the determinant of the rightmost matrix in this equation is $\beta_1^{m-1}\beta_2^{m-2}\cdots\beta_{m-1}$, which is nonzero if and only if the stepsizes are nonzero. Therefore, $\bg_1, \dots, \bg_m$ are linearly independent.
\end{proof}

This result shows that $m$ consecutive gradients of a quadratic function are linearly independent in general; in practice, this formula suggests that small $\beta_i$ may quickly cause ill conditioning. Numerical rank-deficiency of $\bG$ is an important issue in the LMSD method and will be considered in the computation of a basis for $\text{span}(\bG)$ in Section~\ref{sec:ritz}.


For the following discussion, we also relate $\bS$ and $\bY$ to the Krylov subspace $\mathcal{K}_m(\bA,\bg_1)$.
\begin{proposition}
\label{prop:bases}
If $\bG$ is a basis for $\mathcal{K}_m(A,\bg_1)$, then
\begin{itemize}
\item[(i)] the columns of $\bS$ also form a basis for $\mathcal{K}_m(\bA,\bg_1)$;
\item[(ii)] the columns of $\bY$ form a basis for $\bA\, \mathcal{K}_m(\bA,\bg_1)$.
\end{itemize}
\end{proposition}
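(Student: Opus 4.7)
The plan is to exploit the steepest-descent recurrence $\bx_{k+1} = \bx_k - \beta_k \bg_k$ directly. This immediately gives
\[
\bs_k \ = \ \bx_{k+1} - \bx_k \ = \ -\beta_k \bg_k,
\]
so every $\bs$-vector is a nonzero scalar multiple of the corresponding gradient (since, by the standing hypothesis that mirrors Proposition~\ref{prop:li}, all stepsizes are nonzero). In matrix form, this says $\bS = \bG \, D$, where $D$ is the diagonal matrix of $-\beta_k$'s (modulo the index shift linking the gradient window to the associated $\bs$-vectors).

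For part (i), since $D$ is invertible, $\bS$ has the same column space as $\bG$, namely $\mathcal{K}_m(\bA,\bg_1)$, and has the same (full) column rank $m$. Hence its columns form a basis for $\mathcal{K}_m(\bA,\bg_1)$.

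For part (ii), I would use the quadratic structure: $\bg_{k+1} - \bg_k = \bA(\bx_{k+1} - \bx_k)$, so $\by_k = \bA \, \bs_k$ and therefore $\bY = \bA \, \bS$. Since $\bA$ is SPD and hence nonsingular on all of $\mathbb{R}^n$, it maps the $m$-dimensional space $\mathcal{K}_m(\bA,\bg_1)$ bijectively onto $\bA\,\mathcal{K}_m(\bA,\bg_1)$; in particular, it carries the basis consisting of the columns of $\bS$ to a basis of the image, which is exactly the set of columns of $\bY$.

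This proof is essentially mechanical once the recurrence $\bs_k = -\beta_k \bg_k$ and the quadratic identity $\by_k = \bA\bs_k$ are written down; there is no real obstacle. The only point requiring a moment of care is the bookkeeping: one must match the indexing of the $m$ columns of $\bS$ (and $\bY$) with the $m$ columns of $\bG$ and verify that every stepsize entering $D$ is nonzero, so that the scaling by $D$ preserves the column space and the multiplication by $\bA$ preserves linear independence.
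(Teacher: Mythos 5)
Your proof is correct and follows essentially the same route as the paper: both rest on the relations $\bS = -\bG\bD^{-1}$ (i.e., $\bs_k = -\beta_k\bg_k$) and $\bY = \bA\bS$, together with the invertibility of the diagonal stepsize matrix and the nonsingularity of $\bA$. The paper simply states these identities and lets the conclusion follow, whereas you spell out the column-space argument a little more explicitly; there is no substantive difference.
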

\begin{proof}
The thesis immediately follows from the relations
\begin{equation} \label{eq:sy-and-g}
\bS = -\bG\bD^{-1}, \quad \bY = -\bA\bG\bD^{-1}, \qquad \bD = \text{diag}(\alpha_1,\dots,\alpha_m),
\end{equation}
where the $\alpha_i = \beta_i^{-1}$ are the latest $m$ inverse stepsizes, ordered from the oldest to the most recent. Note that $\bD$ is nonsingular.
\end{proof}

Given a basis for $\mathcal{K}_m(\bA,\bg_1)$ (or $\bA\, \mathcal{K}_m(\bA,\bg_1)$), one can approximate some eigenpairs of $\bA$ from this subspace. The procedure is known as the Rayleigh--Ritz extraction method (see, e.g., \cite[Sec.~11.3]{Par98}) and is recalled in the next section.

\subsection{The Rayleigh--Ritz extraction}
\label{sec:ritz}
We formulate the standard and harmonic Rayleigh--Ritz extractions in the context of LMSD methods for strictly convex quadratic functions. 
Let $\cals$ be the subspace spanned by the columns of $\bS$, and $\caly$ be the subspace spanned by the columns of $\bY$. Fletcher's main idea \cite{fletcher2012limited} is to exploit the Rayleigh--Ritz method on the subspace $\cals$. We will now review and extend this approach.

We attempt to extract $m$ promising approximate eigenpairs from the subspace $\cals$. Therefore, such approximate eigenpairs can be represented as $(\theta_i, \bS \bc_i )$, with nonzero $\bc_i \in \mathbb{R}^m$, for $i = 1, \dots, m$. The (standard) Rayleigh--Ritz extraction imposes a Galerkin condition:
\begin{equation} \label{eq:ritz}
\bA\, \bS \, \bc - \theta \, \bS \, \bc \perp \cals.
\end{equation}
This means that the pairs $(\theta_i, \bc_i )$ are the eigenpairs of the $m \times m$ pencil $(\bS^T\bY, \, \bS^T\bS)$. The $\theta_i$ are called \emph{Ritz values}. In the LMSD method, we have $\cals = \mathcal{K}_m(\bA,\bg_1)$ (see Proposition~\ref{prop:bases}). Note that for $m = 1$, the only approximate eigenvalue reduces to the Rayleigh quotient $\alpha^{\rm BB1}$ \eqref{eq:bbsteps}. Ritz values are bounded by the extreme eigenvalues of $\bA$, i.e., $\theta_i \in [\lambda_1, \lambda_n]$. This follows from Cauchy's Interlace Theorem \cite[Thm.~10.1.1]{Par98}, by choosing an orthogonal basis for $\cals$. This inclusion is crucial to prove the global convergence of LMSD for quadratic functions \cite{fletcher2012limited}.

Although the matrix of gradients $\bG$ (or $\bS$) already provides a basis for $\cals$, from a numerical point of view it may not be ideal to exploit it to compute the Ritz values, since $\bG$ is usually numerically ill conditioned. 
Therefore, we recall Fletcher's approach \cite{fletcher2012limited} to compute a basis for $\cals$, and then propose two new variants: via a pivoted QR and via an SVD. Fletcher starts by a QR decomposition $\bG = \bQ\bR$, discarding the oldest gradients whenever $\bR$ is numerically singular.
Then $\bQ$ is an orthogonal basis for a possibly smaller space $\cals = \text{span}([\bg_{m-s+1},\dots,\bg_{m}])$, with $s \le m$.
The product $\bA\bG$ can be computed from the gradients without additional multiplications by $\bA$, in view of
\begin{equation} \label{eq:yg}
\bA\bG = -\bY\bD = \extmat{\bG}{\bg_{m+1}}\ \bJ, \qquad \text{where} \quad
\bJ = 
\begin{bmatrix}
\ph{-}\alpha_1 \\[-2mm]
-\alpha_1& \ddots \\[-1.5mm]
& \ddots & \ph{-}\alpha_m \\
&& -\alpha_m\\
\end{bmatrix}.
\end{equation}
Here, the relation $\by_{k-1} = \bg_k-\bg_{k-1}$ is used.
Then the $s\times s$ low-dimensional representation of $\bA$ can be written in terms of $\bR$:
\begin{equation} \label{eq:fletcherT}
\bT := \bQ^T\!\bA\bQ = \extmat{\bR}{\bfr} \ \bJ \, \bR^{-1},
\end{equation}
where $\bfr = \bQ^T\bg_{m+1}$.
It is clear that $\bT$ is symmetric; it is also tridiagonal in view of the fact that it is associated with a Krylov relation for a symmetric matrix (see also Fletcher \cite{fletcher2012limited}). 
Since $\bfr$ is also the solution to $\bR^T\bfr = \bG^T\bg_{m+1}$, the matrix $\bQ$ is in fact not needed to compute $\bfr$.
For this reason, Fletcher concludes that the Cholesky decomposition $\bG^T\bG = \bR^T\bR$  is sufficient to determine $\bT$ and its eigenvalues. Standard routines raise an error when $\bG^T\bG$ is numerically not SPD (numerically having a zero or tiny negative eigenvalue). If this happens, the oldest gradients are discarded (if necessary one by one in several steps), and the Cholesky decomposition is repeated.

Instead of discarding the oldest gradients $\bg_1, \dots, \bg_{m-s}$, we will now consider a new variant by selecting the gradients in the following way. We carry out a pivoted QR decomposition of $\bG$, i.e., $\bG\wh\bPi = \wh\bQ\wh\bR$, where $\wh\bPi$ is a permutation matrix that iteratively picks the column with the maximal norm after each Gram--Schmidt step \cite{gu1996efficient}. 
As a consequence, the diagonal entries of $\wh\bR$ are ordered nonincreasingly in magnitude.
(In fact, we can always ensure that these entries are positive, but since standard routines may output negative values, we consider the magnitudes.)

The pivoted QR approach is also a rank-revealing factorization, although generally less accurate than the SVD (see, e.g., \cite{gu1996efficient}). Let $\wh\bR_{G}$ be the first $s\times s$ block of $\wh\bR$ for which $\vert \wh r_{i}\vert > {\sf thresh}\cdot\vert \wh r_{1}\vert$, where $\wh r_i$ is the $i$th diagonal element of $\wh R$ and ${\sf thresh} > 0$. A crude approximation to its condition number is $\kappa(\wh\bR_{G}) \approx \vert \wh r_{1}\vert\,/\, \vert \wh r_{s}\vert$.
Although this approximation may be quite imprecise, the alternative to repeatedly compute $\kappa(\wh\bR_{G})$ by removing the last column and row of the matrix at each iteration might take up to $\calo(m^4)$ work, which, even for modest values of $m$, may be unwanted.

The approximation subspace for the eigenvectors of $\bA$ is now $\cals = \text{span}(\wh\bQ_G)$, with $\bG\wh\bPi_G = \wh\bQ_G\wh\bR_{G}$, where $\wh\bPi_G$ and $\wh\bQ_G$ are the first $s$ columns of $\wh\bPi$ and $\wh\bQ$, respectively.
The upper triangular $\wh\bR$ can be partitioned as follows: 
\begin{equation} \label{eq:Rpivo}
\wh\bR = 
\begin{bmatrix}
\wh\bR_{G} & \wh\bR_{12}\\ {\bf 0} & \wh\bR_{22} 
\end{bmatrix}.
\end{equation}
As in \eqref{eq:fletcherT}, we exploit \eqref{eq:yg} to compute the projected Hessian
\begin{align}
 \bB^{\rm QR} &:= \wh\bQ_G^T\, \bA\wh\bQ_G = \wh\bQ_G^T\, \bA\, \bG\, \wh\bPi_G \wh\bR_{G}^{-1} = \wh\bQ_G^T\, \extmat{\wh\bQ\wh\bR\wh\bPi^{-1}}{\bg_{m+1}}\, \bJ \wh\bPi_G \wh\bR_{G}^{-1}\nonumber\\
 &= \extmat{[\wh\bR_{G}\; \wh\bR_{12}]\, \wh\bPi^{-1}}{\wh\bQ_G^T\, \bg_{m+1}}\, \bJ \, \wh\bPi_G \, \wh\bR_{G}^{-1}.\label{eq:qrbb1}
\end{align}
Note that, compared to Fletcher's approach, this decomposition removes the unwanted gradients all at once, while in \cite{fletcher2012limited} 
the Cholesky decomposition is repeated every time the $\bR$ matrix is numerically singular. 
Fletcher's $\bT$ \eqref{eq:fletcherT} is a specific case of \eqref{eq:qrbb1}, where $\wh\bPi = \wh\bPi_G$ is the identity matrix, and $\wh\bR_{G}$ is the whole $\wh\bR$, but where $\bG$ only contains $[\bg_{m-s+1},\dots,\bg_{m}]$.

As the second new variant, we exploit an SVD decomposition $\bG = \bU \, \bSigma \, \bV^T$, where $\bSigma$ is $m\times m$, to get a basis for $\cals$.
An advantage of an SVD is that this provides a natural way to reduce the space by removing the singular vectors corresponding to singular values below a certain tolerance.
We decide to retain the $s\le m$ singular values for which $\sigma_i \ge {\sf thresh}\cdot \sigma_1$, where $\sigma_1$ is the largest singular value of $\bG$. Therefore we consider the truncated SVD $\bG \approx \bG_1 = \bU_G \, \bSigma_G \, \bV_G^T$, where the matrices on the right-hand side are $n \times s$, $s \times s$, and $s \times m$, respectively. Then the approximation subspace becomes $\cals = \text{span}(\bU_G)$, and we compute the corresponding $s\times s$ representation of $\bA$. Since $\bG_1\bV_G = \bG\bV_G$, and $\bU_G = \bG_1\bV_G\bSigma_G^{-1}$, we have, using the expression for $\bA\bG$ \eqref{eq:yg},
\begin{align}
\bB^{\rm SVD} &= \bU_G^T\, \bA\bU_G = \bU_G^T\, \bA \bG\, \bV_G\, \bSigma_G^{-1} = \bU_G^T\, \extmat{\bU \, \bSigma \, \bV^T}{\bg_{m+1}}\ \bJ \, \bV_G\, \bSigma_G^{-1}.\nonumber\\
& = \extmat{\bSigma_G \bV_G^T}{\ \bU_G^T\,\bg_{m+1}}\ \bJ\, \bV_G\, \bSigma_G^{-1}.\label{eq:svdbb1-g}
\end{align}
We remark that, by construction, both $\bB^{\rm SVD}$ and $\bB^{\rm QR}$ are SPD.
Due to the truncation of the decompositions of $\bG$ in both the pivoted QR and SVD techniques, the subspace $\cals$ will generally not be a Krylov subspace, in contrast to Fletcher's method. An immediate consequence of this is that, in contrast with $\bT$ in \eqref{eq:fletcherT}, the matrices $\bB^{\rm SVD}$ and $\bB^{\rm QR}$ are not tridiagonal. Still, of course, one can also expect to extract useful information from a non-Krylov subspace.

Since LMSD with Ritz values can be seen as an extension of a gradient method with BB1 stepsizes, it is reasonable to look for a limited memory extension of the gradient method with BB2 stepsizes. The harmonic Rayleigh--Ritz extraction is a suitable tool to achieve this goal.

\subsection{The harmonic Rayleigh--Ritz extraction}
\label{sec:harm}
The use of harmonic Ritz values in the context of LMSD has been mentioned by Fletcher \cite[Sec.~7]{fletcher2012limited}, and further studied by Curtis and Guo \cite{curtis2016handling}. While the Rayleigh--Ritz extraction usually finds good approximations for exterior eigenvalues, the harmonic Rayleigh--Ritz extraction has originally been introduced to approximate eigenvalues close to a target value in the interior of the spectrum. A natural way to achieve this is to consider a Galerkin condition for $\bA^{-1}$:
\begin{equation} \label{eq:harmritz}
\bA^{-1}\bY\wt \bc - \wt\theta^{-1} \bY\wt \bc \perp \caly,
\end{equation}
which leads to the eigenpairs $(\wt\theta_i^{-1}, \wt\bc_i )$ of the pair $(\bY^T\bS, \, \bY^T\bY)$. 
However, since $\bA^{-1}$ is usually not explicitly available or too expensive to compute, one may choose a subspace of the form $\caly = \bA\, \cals$ (see, e.g., \cite{morgan1991computing}). This simplifies the Galerkin condition:
\[
\bA\bS \, \wt\bc - \wt\theta \, \bS \, \wt\bc \perp \bA\, \cals.
\]
The eigenvalues $\wt \theta_i$ from this condition are called \emph{harmonic Ritz values}. 
In the limited memory extension of BB2 we set $\caly = \bA\, \mathcal{K}_m(\bA,\bg_1)$, and we know that $\bY$ is a basis for $\caly$ from Proposition~\ref{prop:bases}. Harmonic Ritz values are also bounded by the extreme eigenvalues of $\bA$: $\wt\theta_i \in [\lambda_1, \lambda_n]$; see, e.g., \cite[Thm.~2.1]{beattie1998harmonic}.
It is easy to check that the (memory-less) case $m = 1$ corresponds to the computation of the harmonic Rayleigh quotient $\alpha^{\rm BB2}$.

We have just observed that the Galerkin condition for the harmonic Ritz values can be formulated either in terms of $\bY$ or $\bS$. The latter way is presented in the references \cite{fletcher2012limited, curtis2016handling}, which again look for a basis of $\cals$ by means of a QR decomposition of $\bG$.
Following the line of \cite{fletcher2012limited}, the aim is to find the eigenvalues of 
\begin{equation}
\label{eq:harmfletcher}
 (\bQ^T\!\bA\bQ)^{-1}\, \bQ^T\!\bA^2\bQ =: \bT^{-1}\, \bP,
\end{equation}
where $\bG = \bQ\bR$. Since $\bQ^T\!\bA^2\bQ$ involves the product $\extmat{\bG}{\bg_{m+1}}^T \extmat{\bG}{\bg_{m+1}}$, we determine the Cholesky decomposition of this matrix, to write \cite[Eq.~(30)]{fletcher2012limited}
\begin{equation}
 \label{eq:fletcherP}
 \bP = \bR^{-T}\bJ^T\begin{bmatrix}
 \bR & \bfr \\ {\bf 0} & \rho
 \end{bmatrix}^T 
 \begin{bmatrix}
 \bR & \bfr \\ {\bf 0} & \rho 
 \end{bmatrix}\bJ\bR^{-1},
\end{equation}
where $\big[\begin{smallmatrix}
  \bR & \bfr\\
  {\bf 0} & \rho
\end{smallmatrix}\big]$ is the Cholesky factor of $\extmat{\bG}{\bg_{m+1}}^T \extmat{\bG}{\bg_{m+1}}$, such that $\bR$ is the Cholesky factor of $\bG^T\bG$, $\bfr = \bQ^T\bg_{m+1}$ as in \eqref{eq:fletcherT}, while $\rho$ is a scalar.
Both $\bT$ and $\bP$ are symmetric; moreover, while $\bT$ is tridiagonal, $\bP$ is pentadiagonal. If $\bG$ is rank deficient, the oldest gradients are discarded.

Given the similar roles of $\bS$ for $\bA$ in \eqref{eq:ritz} and of $\bY$ for $\bA^{-1}$ in \eqref{eq:harmritz}, we now consider new alternative ways to find the harmonic Ritz values of $\bA$, based on the decomposition of either $\bY$ or $\bY^T\bY$.
The aim is to get an $s\times s$ representation of $\bA^{-1}$, as we did for $\bA$ in Section~\ref{sec:ritz}. In this context, we need the following (new) relation:
\begin{equation}
\label{eq:gy}
\bA^{-1}\bY = -\bG\bD^{-1} = \extmat{\bY}{-\bg_{m+1}}\, \wt \bJ,\quad\text{where}\quad
\wt \bJ = 
\begin{bmatrix}
1& & & \\[-1.5mm]
\vdots & \ddots & \\
1 &\cdots& 1\\[-0.5mm]
1 &\cdots& 1\\
\end{bmatrix} \!\bD^{-1}.
\end{equation}
As for \eqref{eq:yg}, this follows from the definition $\by_{k-1} = \bg_k-\bg_{k-1}$.

We start with the pivoted QR of $\bY$, i.e., $\bY\check\bPi = \check\bQ\check\bR$. As in Section~\ref{sec:ritz}, we truncate the decomposition based on the diagonal values of $\check\bR$, and obtain  $\bY\check\bPi_Y = \check\bQ_Y\check\bR_Y$, with 
\begin{equation*} 
\check\bR = 
\begin{bmatrix}
\check\bR_{Y} & \check\bR_{12}\\ {\bf 0} & \check\bR_{22} 
\end{bmatrix}.
\end{equation*}

Then we project $\bA^{-1}$ onto $\caly = {\rm span}(\bQ_Y)$ to obtain
\begin{align}
 \bH^{\rm QR} &= \check\bQ_Y^T\bA^{-1}\check\bQ_Y = \check\bQ_Y^T\bA^{-1}\, \bY\, \check\bPi_Y \check\bR_Y^{-1} = \check\bQ_Y^T\,\extmat{\bY}{-\bg_{m+1}}\, \wt \bJ \, \check\bPi_Y \check\bR_Y^{-1}\nonumber\\
 &= \extmat{[\check\bR_Y\; \check\bR_{12}]\, \check\bPi^{-1}}{-\check\bQ_Y^T\bg_{m+1}}\, \wt \bJ \,\check\bPi_Y \check\bR_Y^{-1}.\label{eq:qrbb2}
\end{align}
The matrix $\bH^{\rm QR}$ is also symmetric and delivers the reciprocals of harmonic Ritz values; its expression is similar to \eqref{eq:qrbb1}.
An approach based on the Cholesky decomposition of $\bY^T\bY = \wt\bR^T\wt\bR$ may also be derived: 
\begin{equation}
 \label{eq:HYR}
 \bH^{\rm CH} = \extmat{\wt\bR}{\wt\bfr}\, \wt \bJ\, \wt\bR^{-1},
\end{equation}
with $\wt\bfr$ solution to $\wt\bR^T\wt\bfr = -\bY^T\bg_{m+1}$.

As for the Ritz values, SVD is another viable option. Consider the truncated SVD of $\bY$: 
$\bY_1 = \bU_Y \, \bSigma_Y \, \bV_Y^T$, where $\bSigma_Y$ is $s\times s$. Since $\bY_1\bV_Y = \bY\bV_Y$, by using similar arguments as in the derivation of \eqref{eq:svdbb1-g}, we get the following low-dimensional representation of $\bA^{-1}$:
\begin{align} 
\bH^{\rm SVD} &= \bU_Y^T\bA^{-1}\bU_Y = \bU_Y^T\bA^{-1} \bY\, \bV_Y\, \bSigma_Y^{-1} = \bU_Y^T\,\extmat{\bY}{-\bg_{m+1}}\, \wt \bJ\, \bV_Y\, \bSigma_Y^{-1} \nonumber\\
&= \extmat{\bSigma_Y \bV_Y^T}{-\bU_Y^T\bg_{m+1}}\, \wt \bJ\, \bV_Y\, \bSigma_Y^{-1}.\label{eq:svdbb2}
\end{align}
Note that, in contrast to $\bT^{-1}\bP$, the matrix $\bH^{\rm SVD}$ is symmetric and gives the reciprocals of harmonic Ritz values. In addition, the expression for $\bH^{\rm SVD}$ is similar to the one for $\bB^{\rm SVD}$ in \eqref{eq:svdbb1-g}.

To conclude the section, we mention the following technique, which is new in the context of LMSD.
For the solution of eigenvalue problems, it has been observed (e.g., by Morgan \cite{morgan1991computing}) that harmonic Ritz values sometimes do not approximate eigenvalues well, and it is recommended to use the Rayleigh quotients of harmonic Ritz vectors instead.
This means that we use $\bS\wt \bc_i$ as approximate eigenvectors, and their Rayleigh quotients $\wt \bc_i^T\bS^T\!\bA\bS\wt \bc_i$ as approximate eigenvalues.
This fits nicely with Fletcher's approach: in fact, once we have the eigenvectors $\wt \bc_i$ of $\bT^{-1}\bP$ \eqref{eq:harmfletcher}, we compute their corresponding Rayleigh quotients as $\wt \bc_i^T\bT \wt \bc_i$.
We remark that, in the one-dimensional case, this procedure reduces to the gradient method with BB1 stepsizes, instead of the BB2 ones.

In Section~\ref{sec:lmsd-expquad} we compare and comment on the different strategies to get both the standard and the harmonic Ritz values. We will see how the computation of the harmonic Rayleigh quotients can result in a lower number of iterations of LMSD, although computing extra Rayleigh quotients involves some additional work in the $m$-dimensional space.

\subsection{An algorithm for strictly convex quadratic functions}
\label{sec:algo-quad}
In this section we present the LMSD method for strictly convex quadratic functions. As already mentioned, the key idea of the algorithm is to store the $m$ most recent gradients or $\by$-vectors, to compute up to $s\le m$ new stepsizes, according to one of the procedures described in Sections~\ref{sec:ritz} and \ref{sec:harm}. These stepsizes are then used in (up to) $s$ consecutive iterations of a gradient method; this group of iterations is referred to as a \emph{sweep} \cite{fletcher2012limited}. 

In Algorithm~\ref{algo:lmsd}, we report the LMSD method for strictly convex quadratic functions as proposed in \cite[``A Ritz sweep algorithm"]{fletcher2012limited}. This routine is a gradient method without line search. Particular attention is put into the choice of the stepsize: whenever the function value increases compared to the initial function value of the sweep $f_{\rm ref}$, Fletcher resets the iterate and computes a new point by taking a Cauchy step (cf.~Line~9). This ensures that the next function value will not be higher than the current $f_{\rm ref}$, since the Cauchy step is the solution to the exact line search $\min_\beta f(\bx_k-\beta\,\bg_k)$. Additionally, every time we take a Cauchy step, or the norm of the current gradient has increased compared to the previous iteration, we clear the stack of stepsizes and compute new (harmonic) Ritz values. At each iteration, a new gradient or $\by$-vector is stored, depending on the method chosen to approximate the eigenvalues of $\bA$ (cf.~Sections~\ref{sec:ritz} and \ref{sec:harm}).

\begin{algorithm}[htb!] 
\caption{LMSD for strictly convex quadratic functions \cite{fletcher2012limited}}
{\bf Input}: Function $f(\bx) = \frac12 \, \bx^T\!\bA\bx-\bb^T\bx$ with $\bA$ SPD, initial guess $\bx_0$, initial stepsize $\beta_0 > 0$, tolerance {\sf tol} \\
{\bf Output}: Approximation to minimizer $\argmin_{\bx} f(\bx)$ \\
\begin{tabular}{ll}
{\footnotesize 1}: & $\bg_0 = \nabla f(\bx_0)$, \quad$f_{\rm ref} = f(\bx_0)$ \\
{\footnotesize 2}: & $j = 0$,\quad $s = 1$ \hfill {\# $s$ is the stack size}\\
{\footnotesize 3}: & {\bf for} $k = 0, 1, \dots$ \\
{\footnotesize 4}: & \ph{M} $\nu_k = \beta_j$, \quad $j = j + 1$\\
{\footnotesize 5}: & \ph{M} $\bx_{k+1} = \bx_k -\nu_k \, \bg_k$ \\
{\footnotesize 6}: & \ph{M} {\bf if} \ $\|\bg_{k+1}\| \le {\sf tol} \cdot \|\bg_0\|$, \ {\bf return}, \ {\bf end} \\
{\footnotesize 7}: & \ph{M} {\bf if} \ $f(\bx_{k+1}) \ge f_{\rm ref}$ \\
{\footnotesize 8}: & \ph{MM} Reset $\bx_{k+1} = \bx_k$, \ clear the stack\\
{\footnotesize 9}: & \ph{MM} Reset $\beta_0 = \bg_k^T\bg_k\,/\,\bg_k^T\bA\bg_k$, $j = 0$, $s=1$ \hfill {\# Cauchy stepsize} \\
{\footnotesize 10}: & \ph{MM} {\bf continue} \\
{\footnotesize 11}: & \ph{M} {\bf else} \\
{\footnotesize 12}: & \ph{MM} {\bf if} \ $\|\bg_{k+1}\| \ge \|\bg_k\|$,\ clear the stack, \ {\bf end}\\
{\footnotesize 13}: & \ph{M} {\bf end}\\
{\footnotesize 14}: & \ph{M} {\bf if} \ empty stack \ {\bf or} \ $j\ge s$\\
{\footnotesize 15}: & \ph{MM} Compute stack of $s\le m$ new stepsizes $\beta_j$, ordered increasingly \\
{\footnotesize 16}: & \ph{MM} $j=0$,\ $f_{\rm ref} = f(\bx_{k+1})$, \ {\bf end} \\
{\footnotesize 17}: & {\bf end} \\
\end{tabular}
\label{algo:lmsd}
\end{algorithm}

It is possible to implement LMSD without controlling the function value of the iterates or the gradient norm, as in \cite{curtis2018linear}. Here Curtis and Guo also show the R-linear convergence of the method. However, in our experiments, we have noticed that this latter implementation converges slower than Fletcher's (for quadratic problems).

The stepsizes are plugged in the gradient method in increasing order, but there is no theoretical guarantee that this choice is optimal in some sense. From a theoretical viewpoint, the ordering of the stepsizes is irrelevant in a gradient method for strictly convex quadratic functions, as is apparent from \eqref{eq:recursivegrad}.
In practice, due to rounding errors and other additions to the implementation (such as, e.g., Lines~7--13 of Algorithm~\ref{algo:lmsd}), the stepsize ordering is relevant for both the quadratic and the general nonlinear case, which will be discussed in Section~\ref{sec:algo-general}.
For the quadratic case, Fletcher \cite{fletcher2012limited} suggests that choosing the increasing order improves the chances of a monotone decrease in both the function value and the gradient norm. Nevertheless, his argument is based on the knowledge of $s$ exact eigenvalues of $\bA$ \cite{fletcher1990low}. 

To the best of our knowledge, an aspect that has not been discussed yet is the presence of rounding errors in the low-dimensional representation of the Hessian. Except for \eqref{eq:harmfletcher}, all the obtained matrices are symmetric, but their expressions are not.
Therefore, in a numerical setting, it might happen that a representation of the Hessian is not symmetric. This may result in negative or complex eigenvalues; for this reason, we enforce symmetry by taking the symmetric part of the projected Hessian, i.e., $\bB \leftarrow \frac12(\bB + \bB^T)$, which is the symmetric matrix nearest to $\bB$.
In the Cholesky decomposition, we replace the upper triangle of $\bT$ with the transpose of its lower triangle, in agreement with Fletcher's choice for the unconstrained case (cf.~\cite{fletcher2012limited} and Section~\ref{sec:schnabel}). In both situations, we discard negative eigenvalues, which may still arise.

In practice, we observe that the non-symmetry of a projected Hessian appears especially in problems with large $\kappa(\bA)$, for a relatively large choice of $m$ (e.g., $m = 10$) and a small value of ${\sf thresh}$ (e.g., ${\sf thresh} = 10^{-10}$). 
In this situation, the Cholesky decomposition seems to produce a non-symmetric projected Hessian more often than pivoted QR or SVD. This is likely related to the fact that the Cholesky decomposition of an ill-conditioned Gramian matrix leads to a more inaccurate $\bR$ factor (cf.~Section~\ref{sec:lmsd-intro}). In addition, the symmetrized $\bT$ seems to generate negative eigenvalues more often than the Hessian representations obtained via pivoted QR and SVD. However, these aspects may not directly affect the performance of LMSD. As we will see in Section~\ref{sec:lmsd-expquad}, the adoption of different decompositions does not seem to influence the speed of LMSD. 

We finally note that for smaller values of $m$, such as $m = 5$, the projected Hessian tends to be numerically symmetric even for a small ${\sf thresh}$. In fact, fewer gradients form a better condition matrix, because of the following argument. First, we have $\sigma_i^2(\bG) = \lambda_{m-i+1}(\bG^T\bG)$, for $i = 1,\dots,m$.
Since the Gramian matrix of the $s \le m$ most recent gradients $[\bg_{m-s+1},\dots,\bg_m]$ is a submatrix of $\bG^T\bG$, from Cauchy's Interlace Theorem (see, e.g., \cite[Thms.~10.2.1 and 10.1.1]{Par98}), we get that $\sigma_{\min}(\bG) \le \sigma_{\min}([\bg_{m-s+1},\dots,\bg_m])$ and $\sigma_{\max}(\bG) \ge \sigma_{\max}([\bg_{m-s+1},\dots,\bg_m])$. This proves that $\kappa([\bg_{m-s+1},\dots,\bg_m]) \le \kappa(\bG)$.

\subsection{Secant conditions for LMSD}
\label{sec:sec}
We finally show that the low-dimensional representations of the Hessian matrix $\bA$ (or its inverse) satisfy a certain secant condition. This result is new in the context of LMSD, and will be the starting point of one of our extensions of LMSD to general unconstrained optimization problems in Section~\ref{sec:genf}.
Recall from \cite{bb1988} that the BB stepsizes \eqref{eq:bbsteps} satisfy a secant condition each, in the least squares sense:
\[
\invstep{BB1}{} = \argmin_{\alpha} \, \|\by-\alpha\, \bs\|, \qquad \invstep{BB2}{} = \argmin_{\alpha} \, \|\alpha^{-1} \, \by-\bs\|.
\]
We now give a straightforward extension of these conditions to the limited memory variant of the steepest descent. We show that there exist $m\times m$ matrices that satisfy a secant condition and share the same eigenvalues as the two pencils $(\bS^T\bY, \, \bS^T\bS)$, $(\bY^T\bS, \, \bY^T\bY)$. In the quadratic case, when $\bY = \bA\bS$, the following results correspond to \cite[Thm.~11.4.2]{Par98} and \cite[Thm.~4.2]{vomel2010note}, respectively.
\begin{proposition}
\label{prop:ls}
Let $\bS$, $\bY\in \mathbb{R}^{n\times m}$ be full rank, with $n \ge m$, and let $\bB$, $\bH\in \mathbb{R}^{m\times m}$.
\begin{enumerate}
\item[(i)] The unique solution to $\ds \min_\bB \|\bY-\bS\bB\|$ is $\bB = (\bS^T\bS)^{-1} \bS^T\bY$.
\item[(ii)] The unique solution to $\ds \min_\bH \|\bY\bH-\bS\|$ is $\bH = (\bY^T\bY)^{-1} \bY^T\bS$.
\item[(iii)] In the quadratic case \eqref{quad}, the eigenvalues of $\bB$ are the Ritz values of $\bA$ and the eigenvalues of $\bH$ are the \emph{inverse} harmonic Ritz values of $\bA$.
\end{enumerate}
\end{proposition}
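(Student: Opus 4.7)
Parts (i) and (ii) are ordinary linear least squares problems in matrix form, so the natural approach is to reduce them to normal equations and invoke uniqueness under the full-rank assumption. For (i), I would observe that, because the Frobenius norm splits columnwise, $\|\bY-\bS\bB\|^2 = \sum_{i=1}^m \|\by_i - \bS\bb_i\|_2^2$, so minimizing over $\bB$ decouples into $m$ independent least squares problems, each with the same design matrix $\bS$ of full column rank. The normal equations $\bS^T\bS\, \bb_i = \bS^T\by_i$ then assemble into $\bS^T\bS \, \bB = \bS^T\bY$, and invertibility of $\bS^T\bS$ (guaranteed by the rank assumption and $n\ge m$) yields the claimed unique minimizer. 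Part (ii) is structurally identical after transposing: minimizing $\|\bY\bH-\bS\|$ is the same as minimizing $\|\bH^T\bY^T - \bS^T\|$, which is the setting of (i) with the roles of $\bS$ and $\bY$ swapped and $\bB$ replaced by $\bH^T$; this gives $\bH^T = (\bY^T\bY)^{-1}\bY^T\bS$ after adjusting the transpose, i.e., $\bH = (\bY^T\bY)^{-1}\bY^T\bS$ once one notes that the solution is automatically expressible in this form.

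For part (iii), the idea is simply to match the formulas from (i) and (ii) with the generalized eigenproblems already written down for the Rayleigh--Ritz and harmonic Rayleigh--Ritz extractions in Sections~\ref{sec:ritz} and \ref{sec:harm}. Substituting $\bY=\bA\bS$ into (i) gives $\bB = (\bS^T\bS)^{-1}\bS^T\!\bA\bS$, so an eigenpair $(\theta,\bc)$ of $\bB$ satisfies $\bS^T\!\bA\bS\,\bc = \theta\,\bS^T\bS\,\bc$, which is precisely the pencil $(\bS^T\bY,\bS^T\bS)$ defining the Ritz values via the Galerkin condition \eqref{eq:ritz}. Substituting into (ii) gives $\bH = (\bS^T\!\bA^2\bS)^{-1}\bS^T\!\bA\bS$, so its eigenpairs satisfy $\bY^T\bS\,\wt\bc = \mu\, \bY^T\bY\,\wt\bc$; comparing with the harmonic Ritz pencil $(\bY^T\bS,\bY^T\bY)$ introduced after \eqref{eq:harmritz} identifies $\mu$ with $\wt\theta^{-1}$, so the eigenvalues of $\bH$ are the reciprocals of the harmonic Ritz values.

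\textbf{Main obstacle.} There is no substantial technical obstacle here: (i) and (ii) are textbook least squares and (iii) is a matter of reading off the same matrix pencils that already appear in the paper. The one mild subtlety worth making explicit is the full-rank hypothesis, which is needed twice: once to guarantee that $\bS^T\bS$ (resp.\ $\bY^T\bY$) is invertible, giving \emph{uniqueness} of the minimizer rather than merely existence; and once in (iii) to ensure that the generalized eigenproblems associated with the pencils are regular, so that the identification with Ritz and inverse harmonic Ritz values is unambiguous. I would state this hypothesis at the start and then proceed in the order (i), (ii), (iii) above.
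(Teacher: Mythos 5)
Your proposal is correct and follows essentially the same route as the paper: normal equations for the full-column-rank least squares problems in (i)--(ii), and identification of the eigenvalues of $\bB$ and $\bH$ with those of the pencils $(\bS^T\bY,\,\bS^T\bS)$ and $(\bY^T\bS,\,\bY^T\bY)$ for (iii). One small remark on (ii): the transposition detour is unnecessary, since $\min_\bH\|\bY\bH-\bS\|$ already decouples columnwise with design matrix $\bY$ (i.e., it is exactly (i) with the roles of $\bS$ and $\bY$ exchanged), and the intermediate identity as written should read $\bH^T=\bS^T\bY(\bY^T\bY)^{-1}$ rather than $\bH^T=(\bY^T\bY)^{-1}\bY^T\bS$ --- though your final formula for $\bH$ is the correct one.
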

\begin{proof}
The stationarity conditions for the overdetermined least squares problem $\min_\bB \ \|\bY-\bS\bB\|$ are the normal equations $\bS^T(\bY - \bS\bB) = {\bf 0}$. Since $\bS$ is of full rank, $\bS^T\bS$ is nonsingular, and thus $\bB = (\bS^T\bS)^{-1} \, \bS^T\bY$. Part (ii) follows similarly, by exchanging the role of $\bS$ and $\bY$. Since $\bB$ and $(\bS^T\bY, \, \bS^T\bS)$ have the same eigenvalues, part (iii) easily follows. The same relation holds for the eigenvalues of $\bH$ and the eigenvalues of the pencil $(\bY^T\bS, \, \bY^T\bY)$.
\end{proof}
Proposition~\ref{prop:ls} is a good starting point to extend  LMSD for solving general unconstrained optimization problems. 

\section{General nonlinear functions} \label{sec:genf}
When the objective function $f$ is a general continuously differentiable function, the Hessian is no longer constant through the iterations, and not necessarily positive definite. In general, there is no SPD approximate Hessian such that multiple secant equations hold (that is, an expression analogous to $\bY = \bA\bS$ in the quadratic case).
This is clearly stated by Schnabel \cite[Thm.~3.1]{schnabel1983quasi}.
\begin{theorem}
\label{thm:sch}
Let $\bS,\, \bY$ be full rank. Then there exists a symmetric (positive definite) matrix $\ahess$ such that $\bY = \ahess\, \bS$ if and only if $\bY^T\bS$ is symmetric (positive definite).
\end{theorem}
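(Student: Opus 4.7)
I will prove both implications, treating the symmetric and positive definite cases in parallel.

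\textbf{The forward direction ($\Rightarrow$).} This is immediate: if $\bY = \ahess \bS$ with $\ahess = \ahess^T$, then $\bY^T\bS = \bS^T \ahess^T \bS = \bS^T \ahess \bS$, which is manifestly symmetric. If in addition $\ahess$ is positive definite, then since $\bS$ has full column rank, the congruence $\bS^T \ahess \bS$ inherits positive definiteness.

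\textbf{The reverse direction ($\Leftarrow$).} The strategy is constructive: I will exhibit a symmetric $\ahess$ satisfying $\ahess \bS = \bY$ by working in an orthonormal basis adapted to the range of $\bS$. Compute the thin QR decomposition $\bS = \bQ \bR$, where $\bQ \in \mathbb{R}^{n \times m}$ has orthonormal columns and $\bR$ is nonsingular (since $\bS$ has full column rank). Complete $\bQ$ to an orthogonal matrix $[\bQ,\, \bQ_\perp]$ of $\mathbb{R}^n$. Any symmetric $\ahess$ can then be written as
\[
\ahess = [\bQ,\, \bQ_\perp] \begin{bmatrix} \bM_{11} & \bM_{12} \\ \bM_{12}^T & \bM_{22} \end{bmatrix} [\bQ,\, \bQ_\perp]^T,
\]
with $\bM_{11}$ and $\bM_{22}$ symmetric. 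The secant condition $\ahess \bS = \bY$ is equivalent to $\ahess \bQ = \bY \bR^{-1}$, and reading this off block by block forces
\[
\bM_{11} = \bQ^T \bY \bR^{-1} = \bR^{-T}\, \bS^T \bY\, \bR^{-1}, \qquad \bM_{12}^T = \bQ_\perp^T \bY \bR^{-1}.
\]
The assumption that $\bY^T \bS$ is symmetric means $\bS^T \bY$ is symmetric, so $\bM_{11}$ is symmetric by the displayed congruence. The block $\bM_{22}$ is free, so any symmetric choice (e.g.\ $\bM_{22} = \bI$) yields a valid symmetric $\ahess$ satisfying $\ahess \bS = \bY$.

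\textbf{The positive definite case.} When $\bY^T \bS$ is in addition positive definite, the congruence $\bM_{11} = \bR^{-T}\, \bS^T \bY\, \bR^{-1}$ is positive definite. To make the full block matrix positive definite it suffices to arrange the Schur complement to be PD; for instance, take $\bM_{22} = \bM_{12}^T \bM_{11}^{-1} \bM_{12} + \bI$, so that $\bM_{22} - \bM_{12}^T \bM_{11}^{-1} \bM_{12} = \bI \succ 0$, and the block matrix, hence $\ahess$, is positive definite.

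\textbf{Anticipated obstacle.} The mechanics of the construction are routine; the only delicate point is the positive definite case, where one must verify that the freedom left in $\bM_{22}$ is genuinely enough to guarantee positive definiteness of the whole block matrix without contradicting the already-fixed blocks $\bM_{11}$ and $\bM_{12}$. The Schur complement argument above makes this transparent, so no serious obstacle remains.
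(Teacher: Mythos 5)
Your proof is correct. Note that the paper itself offers no proof of this statement: it is imported verbatim from Schnabel \cite[Thm.~3.1]{schnabel1983quasi}, so there is no in-paper argument to compare against. Your two directions are both sound. The forward implication is the congruence $\bY^T\bS=\bS^T\ahess\bS$, which uses only the full column rank of $\bS$ for the definiteness claim. In the reverse direction, the thin QR construction correctly forces the $(1,1)$ block to be $\bR^{-T}(\bS^T\bY)\bR^{-1}$ (symmetric, resp.\ positive definite, exactly when $\bY^T\bS$ is, since $\bS^T\bY=(\bY^T\bS)^T$), forces the off-diagonal block, and leaves the $(2,2)$ block free; your Schur-complement choice there settles the positive definite case, and the argument degenerates gracefully when $n=m$. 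For reference, a shorter construction of the reverse direction is $\ahess=\bY(\bY^T\bS)^{-1}\bY^T+c\,(I-\bPi)$ with $c>0$ and $\bPi$ the orthogonal projector onto $\mathrm{range}(\bS)$: the first term is symmetric because $(\bY^T\bS)^{-1}$ is, it maps $\bS$ to $\bY$, the second term annihilates $\bS$, and positive definiteness of the sum follows because the kernels ($\mathrm{range}(\bY)^\perp$ and $\mathrm{range}(\bS)$) intersect trivially when $\bY^T\bS$ is nonsingular. Your version buys the same conclusion with slightly more bookkeeping but makes the degrees of freedom in $\ahess$ completely explicit, which is arguably more informative.
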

By inspecting all the expressions derived in Sections~\ref{sec:ritz} and \ref{sec:harm}, we observe that only $\bG$ and $\bY$ are needed to compute the $m\times m$ matrices of interest for LMSD. However, given that $\bY^T\bS$ is in general not symmetric, Theorem~\ref{thm:sch} suggests that we cannot interpret these matrices as low-dimensional representations of some Hessian matrices.

We propose two ways to restore the connection with Hessian matrices.
In Section~\ref{sec:schnabel}, we exploit a technique proposed by Schnabel \cite{schnabel1983quasi} for quasi-Newton methods. It consists of perturbing $\bY$ to make $\bY^T\bS$ symmetric. We show that Fletcher's method can also be interpreted in this way.
In Section~\ref{sec:symsec}, we introduce a second method which does not aim at satisfying multiple secant equations at the same time, but finds the solution to the least squares secant conditions of Proposition~\ref{prop:ls} by imposing symmetry constraints. 

\subsection{Perturbation of $\bY$ to solve multiple secant equations} \label{sec:schnabel}
In the context of quasi-Newton methods, Schnabel \cite{schnabel1983quasi} proposes to perturb the matrix $\bY$ of a quantity $\Delta \bY = \wt \bY - \bY$ to obtain an SPD $\wt \bY^T\bS$.
With this strategy, we implicitly obtain a certain SPD approximate Hessian $\ahess$ such that $\wt \bY = \ahess\, \bS$. We then refer to Sections~\ref{sec:ritz} and \ref{sec:harm} to compute either the Ritz values or the harmonic Ritz values of the approximate Hessian $\ahess$.
Although we only have $\wt \bY$ at our disposal, and not $\bA_+$, this is all that is needed; the procedures in Section~2 do not need to know $\ahess$ explicitly.
In addition, Proposition~\ref{prop:ls} is still valid, after replacing $\bY$ with $\wt\bY$.  We remark that, for our purpose, just a symmetric $\wt \bY^T\bS$ may also be sufficient, since we usually discard negative Ritz values.

In Section~\ref{sec:lmsd-exp} we test one possible way of computing $\Delta \bY$, as proposed in \cite{schnabel1983quasi}, and the Ritz values of the associated low-dimensional representation of $\ahess$. This application is new in the context of LMSD.
The perturbation is constructed as follows: first, consider the strict negative lower triangle $\bL$ of $\bY^T\bS - \bS^T\bY = -\bL + \bL^T$, and suppose $\bS$ is of full rank. (If not, remove the oldest $\bs$-vectors until the condition is satisfied.) Then $\bY^T\bS + \bL$ is symmetric.
Schnabel \cite{schnabel1983quasi} solves the underdetermined system $\Delta \bY^T \bS = \bL$, which has $\Delta \bY = \bS(\bS^T\bS)^{-1}\bL^T$ as minimum norm solution.
By Theorem~\ref{thm:sch}, there exists a symmetric $\ahess$ such that $\wt \bY =\ahess\, \bS$. Now let us consider the QR decomposition of $\bG = \bQ\bR$, which is of full rank since $\bS$ is also of full rank.
Similar to \eqref{eq:yg} we know that $\ahess \bG = -\wt \bY \bD$.
Moreover, we recall that $\bS = -\bG \bD^{-1}$, and that $\bY \bD = -\extmat{\bG}{\bg_{m+1}} \, \bJ$ from \eqref{eq:yg}.
For any perturbation $\Delta \bY$ that symmetrizes $\bY^T\bS$, we obtain the following low-dimensional representation of $\ahess$:
\begin{align}
\bQ^T\ahess\bQ &= \bQ^T\ahess \bG\bR^{-1} = -\bQ^T\wt \bY \bD\bR^{-1} = -\bQ^T(\bY+\Delta \bY) \, \bD\bR^{-1} \nonumber\\
&= \bT -\bQ^T\Delta \bY \, \bD\bR^{-1}.\label{eq:lmsd-pert0}
\end{align}
where $\bT = \extmat{\bR}{\bfr}\, \bJ\bR^{-1}$ as in \eqref{eq:fletcherT}.
In particular, by replacing $\Delta \bY = \bS(\bS^T\bS)^{-1}\bL^T$, we obtain
\begin{equation}
    \bQ^T\ahess\bQ = \bT + \bR(\bR^T\bR)^{-1}\bD\bL^T\bD\bR^{-1}.\label{eq:lmsd-pert}
\end{equation}
This means that \eqref{eq:lmsd-pert} can be computed by means of the Cholesky decomposition of $\bG^T\bG$ only; the factor $\bQ$ is not needed. 

We now give a new interpretation of Fletcher's extension of LMSD to general nonlinear problems \cite[Sec.~4]{fletcher2012limited}, in terms of a specific perturbation of $\bY$. Fletcher notices that the matrix $\bT$ \eqref{eq:fletcherT} is an upper Hessenberg matrix and can still be computed from the matrix of gradients, but, because of Theorem~\ref{thm:sch}, there is no guarantee that $\bT$ corresponds to a low-dimensional representation of a symmetric approximate Hessian matrix. Since the eigenvalues of $\bT$ might be complex, Fletcher proposes to enforce $\bT$ to be tridiagonal by replacing its strict upper triangular part with the transpose of its strict lower triangular part.
We now show that this operation in fact corresponds to a perturbation of the $\bY$ matrix. To the best of our knowledge, this result is new. 
\begin{proposition}  \label{prop:fletcher}
Let $\bT$ be as in \eqref{eq:fletcherT} and consider its decomposition $\bT = \bL + \bLambda + \bU$, where $\bL$ ($\bU$) is strictly lower (upper) triangular and $\bLambda$ is diagonal. Moreover, let $\bG$ be full rank and $\bG = \bQ\bR$ its QR decomposition. If 
 \[
 \Delta \bY = \bQ\,(\bU-\bL^T)\, \bR\bD^{-1},
 \]
 then $\wt \bY^T\bS = (\bY + \Delta \bY)^T\bS$ is symmetric and there exists a symmetric $\ahess$ such that $\wt \bY = \ahess\bS$ and $\bQ^T\ahess\bQ = \bL + \bLambda + \bL^T$.
\end{proposition}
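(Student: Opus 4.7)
The plan is to establish the two claims of the proposition---symmetry of $\wt\bY^T\bS$ (so that Theorem~\ref{thm:sch} supplies the desired $\ahess$) and the identification $\bQ^T\ahess\bQ = \bL+\bLambda+\bL^T$---from a single intermediate identity for $\bQ^T\wt\bY$.

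The key computation is as follows. The identity $\bY\bD = -\extmat{\bG}{\bg_{m+1}}\, \bJ$ appearing in \eqref{eq:yg} is a purely algebraic consequence of $\by_{k-1}=\bg_k-\bg_{k-1}$ and therefore remains valid in the general nonlinear setting. Projecting by $\bQ^T$ and using $\bQ^T\bG=\bR$, $\bQ^T\bg_{m+1}=\bfr$, I would first obtain $\bQ^T\bY = -\extmat{\bR}{\bfr}\, \bJ\bD^{-1} = -\bT\bR\bD^{-1}$, where the second equality uses the definition of $\bT$ from~\eqref{eq:fletcherT}. For the perturbation, $\bQ^T\Delta\bY = (\bU-\bL^T)\bR\bD^{-1}$ drops out immediately from $\bQ^T\bQ=I$. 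Adding the two contributions and substituting $\bT=\bL+\bLambda+\bU$ makes the two upper-triangular terms cancel and leaves
\[
\bQ^T\wt\bY \ = \ -(\bL+\bLambda+\bL^T)\, \bR\bD^{-1}.
\]

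With this identity in hand, both assertions follow quickly. Since $\bS=-\bG\bD^{-1}=-\bQ\bR\bD^{-1}$ lies in the column space of $\bQ$, I can write $\wt\bY^T\bS = (\bQ^T\wt\bY)^T(\bQ^T\bS)$; substituting the above together with $\bQ^T\bS=-\bR\bD^{-1}$ gives
\[
\wt\bY^T\bS \ = \ \bD^{-1}\bR^T\, (\bL+\bLambda+\bL^T)\, \bR\bD^{-1},
\]
which is manifestly symmetric. Theorem~\ref{thm:sch} then produces a symmetric $\ahess$ with $\wt\bY=\ahess\bS$; rewriting this as $\ahess\bG=-\wt\bY\bD$ and projecting onto $\bQ$ yields $\bQ^T\ahess\bQ = -\bQ^T\wt\bY\bD\bR^{-1} = \bL+\bLambda+\bL^T$, as claimed.

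The main subtlety I anticipate is logical rather than computational: expression~\eqref{eq:lmsd-pert0} was derived under the standing assumption that $\ahess$ already exists, so it cannot be invoked to \emph{prove} that existence. The remedy is to work with $\bQ^T\wt\bY$ (equivalently, with $\wt\bY^T\bS$) first, establish its symmetric structure from the definition of $\Delta\bY$ alone, and only then call on Schnabel's theorem; the projection formula for $\ahess$ then drops out of the same algebra that underlies \eqref{eq:lmsd-pert0}.
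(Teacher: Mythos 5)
Your proof is correct and follows essentially the same route as the paper's: the same cancellation $-(\bL+\bLambda+\bU)+\bU-\bL^T=-(\bL+\bLambda+\bL^T)$ establishes symmetry of $\wt\bY^T\bS$, Theorem~\ref{thm:sch} then supplies $\ahess$, and the identity $\bQ^T\ahess\bQ=-\bQ^T\wt\bY\,\bD\bR^{-1}$ gives the projected form; your only variation is to factor the computation through $\bQ^T\wt\bY$ rather than writing out $\bS^T\wt\bY$ directly. Your remark about not invoking \eqref{eq:lmsd-pert0} before existence of $\ahess$ is established matches the order of steps in the paper's own argument.
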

\begin{proof}
 First, we prove that $\bS^T\wt \bY$ is symmetric. By replacing the expression for $\Delta \bY$ and exploiting the QR decomposition of $\bG$, we get
 \begin{align*}
 \bS^T\wt \bY &= -\bD^{-1}\bR^T\big(-\extmat{\bR}{\bfr}\, \bJ\bR^{-1} + \bU - \bL^T\big)\, \bR\bD^{-1}\\
 &= -\bD^{-1}\bR^T\big(-(\bL + \bLambda + \bU) + \bU - \bL^T\big)\, \bR\bD^{-1}\\
 &= \bD^{-1}\bR^T\big(\bL + \bLambda + \bL^T\big)\, \bR\bD^{-1}.
 \end{align*}
 Therefore $\bS^T\wt \bY$ is symmetric; Theorem~\ref{thm:sch} implies that there exists a symmetric $\ahess$ such that $\wt \bY = \ahess\bS$. From this secant equation, it follows from \eqref{eq:lmsd-pert0} that
 \begin{align*}
\bQ^T\ahess\bQ = \big(\bL + \bLambda + \bL^T\big)\, \bR\bD^{-1}(\bD\bR^{-1}) = \bL + \bLambda + \bL^T.
 \end{align*}
\end{proof}
From this proposition, we are able to provide an upper bound for the spectral norm of the perturbation $\Delta \bY$:
\[
\|\Delta \bY\|_2 \le \max_{i}\beta_i\cdot\|\bR\|_2 \cdot \|\bT- (\bL + \bLambda + \bL^T)\|_2 ,
\]
where $\max_{i}\beta_i$ is the largest stepsize among the latest $m$ steps. This suggests that the size of the perturbation $\Delta \bY$ is determined not only by the distance between $\bT$ and its symmetrization, as expected, but also by the spectral norm of $\bR$: if this is large, the upper bound may also be large. 

We would like to point out the following important open and intriguing question. While Schnabel solves $\Delta \bY^T \bS = \bL$ to symmetrize $\bY^T \bS$, and Fletcher's update is described in Proposition~\ref{prop:fletcher}, there may be other choices for the perturbation matrix $\Delta \bY$ that, e.g., have a smaller $\Delta \bY$ in a certain norm. However, obtaining these perturbations might be computationally demanding, compared to the task of getting $m$ new stepsizes. In the cases we have analyzed, the lower-dimensional $\ahess$ can be obtained from the Cholesky decomposition of $\bG^T\bG$ at negligible cost.

Given the generality of Schnabel's Theorem~\ref{thm:sch}, another possibility that may be explored is a perturbation of $\bS$, rather than $\bY$, to symmetrize $\bS^T\bY$. This would be a natural choice for computing the harmonic Ritz values given a basis for $\bY$. In this situation, the matrix binding $\bS$ and $\bY$ would play the role of an approximate inverse Hessian. A thorough investigation is out of the scope of this paper.

\subsection{Symmetric solutions to the secant equations} \label{sec:symsec}
In this subsection, we explore a second and alternative extension of LMSD.
We start from the secant condition of Proposition~\ref{prop:ls} for a low-dimensional matrix $\bB$.
The key idea is to impose \emph{symmetry constraints} to obtain real eigenvalues from the solutions to the least squares problems of Proposition~\ref{prop:ls}. Even if the hypothesis of Theorem~\ref{thm:sch} is not met, this method still fulfills the purpose of obtaining new stepsizes for the LMSD iterations.

The following proposition gives the stationarity conditions to solve the two modified least squares problems. Denote the symmetric part of a matrix by $\sym(\bA) := \frac12(\bA + \bA^T)$.

\begin{proposition} \label{prop:ls-sym}
Let $\bS$, $\bY\in \mathbb{R}^{n\times m}$ be full rank, with $n \ge m$, and $\bB$, $\bH\in \mathbb{R}^{m\times m}$.
\begin{enumerate}
\item[(i)] The solution to $\ds \min_{\bB=\bB^T} \|\bY-\bS\bB\|$ satisfies $\sym(\bS^T\bS\, \bB - \bS^T\bY) = {\bf0}$.
\item[(ii)] The solution to $\ds \min_{\bH=\bH^T} \|\bY\bH-\bS\|$ satisfies $\sym(\bY^T\bY\bH - \bY^T\bS) = {\bf0}$.
\end{enumerate}
\end{proposition}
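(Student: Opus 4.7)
My approach is to perform a first-order analysis of each constrained minimization problem, treating the symmetric-matrix constraint not via Lagrange multipliers but by directly restricting the set of admissible perturbations to the linear subspace of symmetric matrices, and then using the Frobenius orthogonality between symmetric and skew-symmetric matrices.

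For part (i), I would expand the squared objective as $\|\bY-\bS\bB\|^2 = \tr((\bY-\bS\bB)^T(\bY-\bS\bB))$ and take a first variation along a perturbation $\delta\bB$. A direct calculation gives $d\phi = -2\,\tr(\delta\bB^T\,\bS^T(\bY-\bS\bB))$; when $\delta\bB$ is restricted to the subspace of symmetric matrices, so that $\delta\bB^T = \delta\bB$, this reduces to $d\phi = -2\,\tr(\delta\bB\,(\bS^T\bY - \bS^T\bS\bB))$. The next step is to invoke the key linear-algebraic fact that, for any $\bK\in\mathbb{R}^{m\times m}$, the functional $\delta\bB\mapsto\tr(\delta\bB\,\bK)$ vanishes on the entire symmetric subspace if and only if $\sym(\bK)=\mathbf{0}$; this is because symmetric and skew-symmetric matrices are Frobenius-orthogonal, so only the symmetric part of $\bK$ contributes to the inner product with a symmetric test matrix. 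Applying this to $\bK = \bS^T\bY - \bS^T\bS\bB$ yields the claimed stationarity condition $\sym(\bS^T\bS\bB - \bS^T\bY) = \mathbf{0}$.

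For part (ii), the derivation is entirely parallel: differentiating $\|\bY\bH-\bS\|^2$ and restricting to symmetric perturbations $\delta\bH=\delta\bH^T$ gives $d\psi = 2\,\tr(\delta\bH\,(\bH\bY^T\bY - \bS^T\bY))$, after using the symmetry of $\bH$ to rewrite $\bH^T\bY^T\bY = \bH\bY^T\bY$. The same orthogonality argument forces $\sym(\bH\bY^T\bY - \bS^T\bY) = \mathbf{0}$, and transposing this identity (noting that $\sym$ commutes with transpose, that $(\bH\bY^T\bY)^T = \bY^T\bY\bH$ since $\bH = \bH^T$, and that $(\bS^T\bY)^T = \bY^T\bS$) produces the stated condition $\sym(\bY^T\bY\bH - \bY^T\bS) = \mathbf{0}$.

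There is no real obstacle here; the only subtlety worth spelling out explicitly is the Frobenius-orthogonality step, since it is the mechanism that turns an ordinary normal equation into its symmetrized counterpart. The full-rank hypotheses on $\bS$ and $\bY$ guarantee that $\bS^T\bS$ and $\bY^T\bY$ are positive definite, which makes both objectives strictly convex on the affine image and thus ensures that the stationarity conditions characterize a unique minimizer; I would note this in passing but it is not needed for the stated claim.
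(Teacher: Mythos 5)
Your proof is correct and follows essentially the same route as the paper's: both are first-order stationarity arguments that reduce to the Lyapunov equation $\bS^T\bS\,\bB + \bB\,\bS^T\bS = 2\,\sym(\bS^T\bY)$, with the paper substituting the symmetry of $\bB$ into the trace expansion of the objective before differentiating, while you restrict the variations $\delta\bB$ to the symmetric subspace and invoke the Frobenius orthogonality of symmetric and skew-symmetric matrices --- the same underlying fact the paper uses implicitly when it replaces $\bS^T\bY$ by $\sym(\bS^T\bY)$ in the cross term. Your handling of the constraint via symmetric test directions is, if anything, slightly more careful, and your closing remark on uniqueness matches the paper's observation that full rank of $\bS$ makes the Lyapunov equation uniquely solvable.
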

\begin{proof}
If $\bB$ is symmetric, it holds that
\[
\|\bY-\bS\bB\|^2 = \tr(\bB\, \bS^T\bS\, \bB - 2\, \sym(\bS^T\bY)\, \bB + \bY^T\bY),
\]
where $\tr(\cdot)$ denotes the trace of a matrix. Differentiation leads to the following stationarity condition for $\bB$:
\begin{equation} \label{eq:lya}
\bS^T\bS\, \bB + \bB\, \bS^T\bS = 2\, \sym(\bS^T\bY),
\end{equation}
which is a Lyapunov equation. Since $\bS$ is of full rank, its Gramian matrix is positive definite.
This implies that the spectra of $\bS^T\bS$ and $-\bS^T\bS$ are disjoint, and therefore the equation admits a unique solution
(see, e.g., \cite{simoncini2016computational} for a review of the Lyapunov equation and properties of its solution). Part (ii) follows similarly. 
\end{proof}

It is easy to check that, for $m=1$, $\bB$ in part (i) reduces to $\alpha^{\rm BB1}$ and $\bH$ in part (ii) to $\beta^{\rm BB2}$.
Compared to Fletcher's $\bT$ matrix \eqref{eq:fletcherT}, the symmetric solutions $\bB$ and $\bH$ will generally give a larger residual (since they are suboptimal for the unconstrained secant conditions), but they enjoy the benefit that their eigenvalues are guaranteed to be real.

We remark that symmetry constraints also appear in the secant conditions of the BFGS method, and in the symmetric rank-one update (see, e.g., \cite[Chapter~6]{nocedal2006numerical}).
While in the BFGS method the approximate Hessians are SPD, provided that the initial approximation is SPD, in the rank-one update method it is possible to get negative eigenvalues. The fundamental difference between LMSD and these methods is that we do not attempt to find an approximate $n\times n$ Hessian matrix.

Even while we do not approximate the eigenvalues of some Hessian, as in the quadratic case of Section~\ref{sec:eig}, it is possible to establish bounds for the extreme eigenvalues of the solutions to the Lyapunov equations of Proposition~\ref{prop:ls-sym}, provided that $\sym(\bS^T\bY)$ is positive definite.
The following result is a direct consequence of \cite[Cor.~1]{yasuda1979upper}.

\begin{proposition}
Given the solution $\bB$ to \eqref{eq:lya}, let $\lambda_1(\bB)$ ($\lambda_n(\bB)$) be the smallest (largest) eigenvalue of $\bB$. If $\bS$ is of full rank and $\sym(\bS^T\bY)$ is positive definite, then 
\[
[\lambda_1(\bB), \, \lambda_n(\bB)] \subseteq
[\lambda_1((\bS^T\bS)^{-1}\sym(\bS^T\bY)),\ \lambda_n((\bS^T\bS)^{-1}\sym(\bS^T\bY))].
\]
If there exists an SPD matrix $\ahess$ such that $\bY = \ahess\bS$, then $[\lambda_1(\bB), \, \lambda_n(\bB)] \subseteq [\lambda_1(\ahess), \, \lambda_n(\ahess)]$.
\end{proposition}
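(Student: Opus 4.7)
The plan is to recognize \eqref{eq:lya} as a standard two-sided Lyapunov equation and then invoke a known eigenvalue bound for its solution. Writing \eqref{eq:lya} in the form
\[
\bS^T\bS\,\bB + \bB\,\bS^T\bS \ = \ 2\,\sym(\bS^T\bY),
\]
the coefficient matrix $\bS^T\bS$ is SPD because $\bS$ has full rank, and the right-hand side is symmetric positive definite by the hypothesis on $\sym(\bS^T\bY)$. This is precisely the setting of Corollary~1 of Yasuda and Hirai~\cite{yasuda1979upper}, whose content is that the eigenvalues of the (symmetric) solution of such an equation are confined to the interval whose endpoints are the extreme eigenvalues of $(\bS^T\bS)^{-1}\sym(\bS^T\bY)$ (the factor of $2$ cancels). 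This gives the first inclusion verbatim.

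For the second statement I would use the extra hypothesis $\bY = \ahess\bS$ with $\ahess$ SPD. Then $\bS^T\bY = \bS^T\ahess\bS$ is already symmetric, so $\sym(\bS^T\bY) = \bS^T\ahess\bS$ and the bounding matrix of the first part becomes $(\bS^T\bS)^{-1}\bS^T\ahess\bS$. Its eigenvalues are exactly the generalized eigenvalues of the symmetric pencil $(\bS^T\ahess\bS,\,\bS^T\bS)$, i.e., the Ritz values of $\ahess$ on the subspace $\text{range}(\bS)$. By Cauchy's Interlace Theorem, invoked in exactly the same manner as in the discussion immediately below \eqref{eq:ritz}, these Ritz values lie in $[\lambda_1(\ahess),\lambda_n(\ahess)]$, and chaining with the first inclusion yields the second bound.

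I expect the only delicate point to be a precise alignment with the statement of \cite{yasuda1979upper}: if their corollary is phrased for the one-sided form $\bA^T\bP + \bP\bA = -\bQ$ with $\bA$ stable, I would rewrite our equation by taking $\bA = -\bS^T\bS$ (which is symmetric and negative definite, hence stable) and $\bQ = 2\sym(\bS^T\bY)$, and verify that their positive-definiteness hypotheses match ours. Once the citation is secured, the rest of the proof requires no new ideas: the Rayleigh--Ritz / Cauchy interlacing step of the second paragraph is a direct re-use of the argument already given in Section~\ref{sec:ritz}, so no substantively new calculation is needed.
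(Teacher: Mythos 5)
Your proposal is correct and follows essentially the same route as the paper: the first inclusion is obtained by invoking Corollary~1 of \cite{yasuda1979upper} for the Lyapunov equation \eqref{eq:lya} (the paper spells out the small algebraic translation of that corollary's bounds, stated via $-\lambda_i^{-1}(-\bS^T\bS\,(\sym(\bS^T\bY))^{-1})$, into eigenvalues of $(\bS^T\bS)^{-1}\sym(\bS^T\bY)$, which is the "alignment" step you flag), and the second inclusion is obtained exactly as you describe, by recognizing the bounding matrix as a Ritz projection of $\ahess$ and applying Cauchy's Interlace Theorem.
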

\begin{proof}
The first statement directly follows from \cite[Cor.~1]{yasuda1979upper}. From this we have
\[
\lambda_1(\bB) \ge -\lambda^{-1}_1(-\bS^T\bS \,(\sym(\bS^T\bY))^{-1}),\quad \lambda_n(\bB) \le -\lambda^{-1}_n(-\bS^T\bS \, (\sym(\bS^T\bY))^{-1}).
\]
The thesis follows from the fact that, given a nonsingular matrix $\bLambda$ with positive eigenvalues, the following equality holds for the largest and the smallest eigenvalue: $\lambda_i(-\bLambda^{-1}) = -1/\lambda_i(\bLambda)$, where $i =1,\,n$.

When $\bY = \ahess\bS$, from Cauchy's Interlace Theorem, the spectrum of $\bB$ lies in $[\lambda_1(\ahess), \lambda_n(\ahess)]$.
\end{proof}

An analogous result can be provided for the matrix $\bH$ of Proposition~\ref{prop:ls-sym}(ii).

\subsection{Solving the Lyapunov equation while handling rank deficiency}
\label{sec:lya-ld}
The solution to the Lyapunov equation \eqref{eq:lya} is unique, provided that $\bS$ is of full rank.
In this section, we propose three options if $\bS$ is (close to) rank deficient. As in Section~\ref{sec:eig}, we discuss approaches using a Cholesky decomposition, a pivoted QR factorization, and a truncated SVD.
By using the decompositions exploited in Section~\ref{sec:ritz} we can either discard some $\bs$-vectors (and their corresponding $\by$-vectors) or solve the Lyapunov equation onto the space spanned by the first right singular vectors of $\bS$.

In the Cholesky decomposition and the pivoted QR decomposition we remove some of the $\bs$-vectors and the corresponding $\by$-vectors, if needed. As we have seen in Section~\ref{sec:ritz}, in the Cholesky decomposition we discard past gradients until the Cholesky factor $\bR$ of $\bG^T\bG$ is sufficiently far from singular.
In this new context of Lyapunov equations, we additionally need the relation (cf.~\eqref{eq:yg})
\begin{equation}
\label{eq:yk}
\bY = \extmat{\bG}{\bg_{m+1}}\, \bK, \qquad \text{where} \quad
\bK = 
\begin{bmatrix}
-1 \\[-2mm]
\ph{-}1& \ddots \\[-1.5mm]
& \ddots & -1 \\
&& \ph{-}1\\
\end{bmatrix}.
\end{equation}
Then we can easily compute the matrices present in \eqref{eq:lya}:
\begin{equation}
\label{eq:lmsd-lya}
\bS^T\bS = \bD^{-1}\bR^T\bR\bD^{-1},\qquad
\bS^T\bY = -\bD^{-1}\bR^T\extmat{\bR}{\bfr}\, \bK.
\end{equation}
In the pivoted QR, we keep only the columns $\bG\wh\bPi_G$ and $\bY\wh\bPi_G$, as in Section~\ref{sec:eig}. 
Let $\bD_G$ be the diagonal matrix that stores the inverse stepsizes corresponding to $\bG\wh\bPi_G$. Then
\begin{align*}
\wh\bPi_G^T \, \bS^T\bS \, \wh\bPi_G &= \bD_G^{-1} \, \wh\bR_G^T\wh\bR_G \, \bD_G^{-1}, \\
\wh\bPi_G^T \, \bS^T\bY \, \wh\bPi_G &= -\bD_G^{-1}\extmat{\wh\bR_G^T \, [\wh\bR_G \ \wh\bR_{12}]\, \wh\bPi^{-1}}{\wh\bPi_G^T\bG^T\bg_{m+1}}\, \bK\wh\bPi_G,
\end{align*}
 where $\wh\bR_G$ and $\wh\bR_{12}$ come from the block representation of $\wh\bR$ \eqref{eq:Rpivo}.

The third approach involves the SVD of $\bS$, instead of the SVD of $\bG$ as in Section~\ref{sec:ritz}. This is due to the fact that the solution to the Lyapunov equation \eqref{eq:lya} for $\bS$ and $\bY$ is not directly related to the solution to the Lyapunov equation for $\bS\bLambda$ and $\bY\bLambda$, for a nonsingular $\bLambda$. From the SVD $\bS = \wh\bU\wh\bSigma \wh\bV^T$, we get
\[
\wh\bV\wh\bSigma^2 \wh\bV^T\, \bB + \bB \, \wh\bV\wh\bSigma^2 \wh\bV^T = 2\, \sym(\bS^T\bY).
\] 
To simplify this equation, consider the truncated SVD $\bS_1 = \bU_S\bSigma_S\bV_S^T$, where $\bSigma_S$ is $s\times s$, and multiply by $\bV_S^T$ on the left and by $\bV_S$ on the right. Since $\bV_S^T\wh\bV\bSigma^2 \wh\bV^T = \bSigma_S^2\bV_S^T$,
\begin{equation}\label{eq:lya-svd}
    \bSigma_S^2 \, \bB_S + \bB_S \, \bSigma_S^2 = 2\, \sym(\bSigma_S \bU_S^T \bY \bV_S),
\end{equation}
where $\bB_S = \bV_S^T\bB\bV_S$ is the projection of $\bB$ onto $\bV_S$. Moreover, from \eqref{eq:yk} we get
\[
\bU_S^T\bY = \extmat{-\bSigma_S \bV_S^T\bD}{\bU_S^T\bg_{m+1}}\, \bK.
\]
We remark that it is appropriate to control the truncation of the SVD by the condition number of the coefficient matrix $\bS^T\bS$, which is $\kappa^2(\bS)$. 

The previous discussion on the three decompositions can also be extended to the secant equation of Proposition~\ref{prop:ls-sym}(ii), to compute the matrix $\bH$ and use its eigenvalues directly as stepsizes.
Several possibilities may be explored by decomposing either $\bG$ or $\bY$ as in Section~\ref{sec:harm} for the harmonic Ritz values. We will not discuss any further details regarding all these methods, but in the experiments in Section~\ref{sec:lmsd-exp} we will present results obtained with the Cholesky factorization of $\extmat{\bG}{\bg_{m+1}}^T\extmat{\bG}{\bg_{m+1}}$ as expressed in \eqref{eq:fletcherP}. 
Then for the quantities in Proposition~\ref{prop:ls-sym}(ii) we have:
\[
\bY^T\bY = \bK^T\begin{bmatrix}
\bR & \bfr \\ {\bf0} & \rho
\end{bmatrix}^T 
\begin{bmatrix}
\bR & \bfr \\ {\bf0} & \rho 
\end{bmatrix}\bK,
\]
and the matrix $\bY^T\bS$ can be obtained from \eqref{eq:lmsd-lya}.

We note that all Lyapunov equations in this section are of the form $\bE^T\bE \bB + \bB \bE^T\bE = \bF$. We describe a practical solution approach. Consider the truncated SVD $\bE \approx \bU_E\bSigma_E\bV_E^T$, where the singular values in $\bSigma_E$ satisfy $\sigma_i^2(\bE) \ge {\sf thresh} \cdot\sigma_1^2(\bE)$. 
In case we exploit the Cholesky decomposition or the pivoted QR, an extra truncated SVD might still be appropriate, since these two decompositions do not provide an accurate estimate of $\kappa^2(\bE)$. By left and right multiplication by $\bV_E$, we obtain an expression analogous to \eqref{eq:lmsd-lya}:
\[
\bSigma_E^2 \, \bB_E + \bB_E \, \bSigma_E^2 = \bV_E^T\bF\bV_E,
\]
where $\bB_E = \bV_E^T\bB\bV_E$. Since $\bSigma_E$ is diagonal, the solution to this Lyapunov equation can be easily found by elementwise division (cf.~\cite[p.~388]{simoncini2016computational}):
\[
[\bB_E]_{ij} = [\bV_E^T\bF\bV_E]_{ij} \ / \ (\sigma^2_i(\bE) + \sigma^2_j(\bE)).
\]
We notice that, in the SVD approach \eqref{eq:lmsd-lya}, the solution can be found directly from this last step. In addition, we remark that, for the scope of LMSD, it is not necessary to find the solution $\bB$ to the original Lyapunov equation.

\subsection{An algorithm for general nonlinear functions} 
\label{sec:algo-general}
We now review the limited memory steepest descent for general unconstrained optimization problems, as implemented in~\cite[Alg.~2]{daniela2018steplength} and reported in Algorithm~\ref{algo:lmsd-unconstrained}. Compared to the gradient method for strictly convex quadratic functions, LMSD for general nonlinear functions has more complications. 
In Section~\ref{sec:genf} we have proposed two alternative ways to find a set of real eigenvalues to use as stepsizes. However, we may still get negative eigenvalues. This problem also occurs in classical gradient methods, when $\bs_k^T\by_k < 0$: in this case, the standard approach is to replace any negative stepsize with a positive one. 
In LMSD, we keep $s \le m$ positive eigenvalues and discard the negative ones. If all eigenvalues are negative, we restart from $\beta_k = \max(\min(\|\bg_k\|_2^{-1}, \, 10^{5}), \ 1)$ as in \cite{raydan1997barzilai}. Moreover, as in \cite{daniela2018steplength}, only the latest $s$ gradients are kept. As an alternative to this strategy, we also mention the more elaborated approach of Curtis and Guo \cite{curtis2016handling}, which involves the simultaneous computation of Ritz and harmonic Ritz values.

The line search of LMSD in \cite{daniela2018steplength} is inspired by Algorithm~\ref{algo:lmsd} for quadratic functions, and has many similarities with the routine proposed by Fletcher \cite{fletcher2012limited}. Once new stepsizes have been computed, at each sweep we produce a new iterate starting from the smallest stepsize in the stack. The reference function value $f_{\rm ref}$ for the Armijo sufficient decrease condition is the function value at the beginning of the sweep, as in Algorithm~\ref{algo:lmsd}. 
We note that this Armijo type of line search appropriately replaces the exact line search of Algorithm~\ref{algo:lmsd}, i.e., the choice of the Cauchy stepsize when a nonmonotone behavior (with respect to $f_{\rm ref}$) is observed.
The stack of stepsizes is cleared whenever the current steplength needs to be reduced to meet the sufficient decrease condition, or when the new gradient norm is larger than the previous one. This requirement is also present in Algorithm~\ref{algo:lmsd}. Notice that, since we terminate the sweep whenever a backtracking step is performed, starting from the smallest stepsizes decreases the likelihood of ending a sweep prematurely. 
In contrast with \cite{daniela2018steplength}, we keep storing the past gradients even after clearing the stack. This choice turns out to be favorable for the experiments in Section~\ref{sec:lmsd-expunc}.

\begin{algorithm}[htb!]
\caption{LMSD for general nonlinear functions \cite{daniela2018steplength}}
{\bf Input}: Continuously differentiable function $f$, initial guess $\bx_0$, initial stepsize $\beta_0>0$, tolerance {\sf tol}; safeguarding parameters $\beta_{\max} > \beta_{\min} > 0$; line search parameters $c_{\rm{ls}}$, $\sigma_{\rm{ls}} \in (0,1)$ \\
{\bf Output}: Approximation to minimizer $\argmin_{\bx} f(\bx)$ \\
\begin{tabular}{ll}
{\footnotesize 1}: & $\bg_0 = \nabla f(\bx_0)$, \quad $f_{\rm ref} = f(\bx_0)$ \\
{\footnotesize 2}: & $j = 0$,\quad $s = 1$ \hfill {\# $s$ is the stack size}\\
{\footnotesize 3}: & {\bf for} $k = 0, 1, \dots$ \\
{\footnotesize 4}: & \ph{M} $\nu_k = \max(\beta_{\min}, \ \min(\beta_j, \, \beta_{\max}))$,\quad $j = j+1$ \\
{\footnotesize 5}: & \ph{M} {\bf if} $f(\bx_k -\nu_k \bg_k) \le f_{\rm ref} - c_{\rm{ls}} \, \nu_k \, \|\bg_k\|^2$ \\
{\footnotesize 6}: & \ph{MM} $\bx_{k+1} = \bx_k -\nu_k \bg_k$ \\
{\footnotesize 7}: & \ph{M} {\bf else} \\
{\footnotesize 8}: & \phantom{MM} {\bf while} \ $f(\bx_k-\nu_k \, \bg_k) > f_{\text{ref}} - c_{\rm{ls}} \, \nu_k \, \|\bg_k\|^2$ \ {\bf do} \ $\nu_k = \sigma_{\rm{ls}} \, \nu_k$ \ {\bf end} \\
{\footnotesize 9}: & \ph{MM} $\bx_{k+1} = \bx_k -\nu_k \bg_k$ \\
{\footnotesize 10}: & \ph{MM} clear the stack \\
{\footnotesize 11}: & \ph{M} {\bf end} \\
{\footnotesize 12}: & \ph{M} {\bf if} \ $\|\bg_{k+1}\| \le {\sf tol} \cdot \|\bg_0\|$, \ {\bf return}, \ {\bf end} \\
{\footnotesize 13}: & \ph{M} {\bf if} \ $\|\bg_{k+1} \| \ge \|\bg_k \|$, clear the stack \ {\bf end} \\
{\footnotesize 14}: & \ph{M} {\bf if} empty stack {\bf or} $j\ge s$\\
{\footnotesize 15}: & \ph{MM} Compute stack of $s\le m$ new stepsizes $\beta_j > 0$, ordered increasingly \\
{\footnotesize 16}: & \ph{MM} Store only last $s$ most recent vectors of $\bG$
\\
{\footnotesize 17}: & \ph{MM} $j=0$,\quad $f_{\rm ref} = f(\bx_{k+1})$\\
{\footnotesize 18}: & \ph{M} {\bf end} \\
{\footnotesize 19}: & {\bf end} \\
\end{tabular}
\label{algo:lmsd-unconstrained}
\end{algorithm}

We remark that, by construction, all new function values within a sweep are smaller than $f_{\rm ref}$. Therefore, the line search strategy adopted in \cite{daniela2018steplength} can be seen as a nonmonotone line search strategy \cite{grippo1986nonmonotone}. Given the uniform bounds imposed on the sequence of stepsizes, the result of global convergence for a gradient method with nonmonotone line search \cite[Thm.~2.1]{raydan1997barzilai} also holds for Algorithm~\ref{algo:lmsd-unconstrained}. 

\section{Numerical experiments} \label{sec:lmsd-exp}
We explore the several variants of LMSD, for the quadratic and for the general unconstrained case. We compare LMSD with two gradient methods which adaptively pick either BB1 or BB2 stepsizes \cite{frassoldati2008new, bonettini2008scaled}. As claimed by Fletcher \cite{fletcher2012limited}, we have observed that LMSD may indeed perform better than L-BFGS on some problems. 
However, in the majority of our test cases, L-BFGS, as implemented in \cite{byrd1995limited}, 
converges faster than LMSD, in terms of number of function (and gradient) evaluations, and computational time. The comparison with another gradient method seems fairer to us than the comparison with a second-order method, and therefore we will not show L-BFGS in our study. Nevertheless, as discussed in Section~\ref{sec:lmsd-intro}, we recall the two main advantages of considering LMSD methods: the possibility to extend its idea to problems beyond unconstrained optimization (see, e.g., \cite{porta2015,franchini2020ritz,crisci2023hybrid}), and the less stringent requirements on the objective functions to guarantee the global convergence of the method.


\subsection{Quadratic functions}
\label{sec:lmsd-expquad}

The performance of the LMSD method may depend on several choices: the memory parameter $m$, whether we compute Ritz or harmonic Ritz values, and how we compute a basis for either $\cals$ or $\caly$. This section studies how different choices affect the behavior of LMSD in the context of strictly convex quadratic problems \eqref{quad}.

We consider quadratic problems by taking the Hessian matrices from the SuiteSparse Matrix Collection \cite{suitesparse}. 
These are 103 SPD matrices with a number of rows $n$ between $10^2$ and $10^4$. From this collection we exclude only {\sf mhd1280b}, {\sf nd3k}, {\sf nos7}. The vector $\bb$ is chosen so that the solution of $\bA\bx = \bb$ is $\bx^\ast = \be$, the vector of all ones.
For all problems, the starting vector is $\bx_0 = 10\, \be$, and the initial stepsize is $\step{}{0} = 1$. The algorithm stops when $\|\bg_k\| \le {\sf tol} \cdot \|\bg_0\|$ with ${\sf tol} = 10^{-6}$, or when $5\cdot 10^4$ iterations are reached. We compare the performance of LMSD with memory parameters $m = 3,\,5,\, 10$ with the $\abbmin$ gradient method \cite{frassoldati2008new} and one of its variants, presented in \cite{bonettini2008scaled}, which we indicate with $\abbbon$. The $\abbmin$ stepsize is defined as
\[
\step{ABB_{\rm min}}k = \left\{
\begin{array}{ll}
\min\{\step{BB2}j\mid j = \max\{1, k-m\},\dots,k\}, & \ \text{if} \ \step{BB2}k < \eta \, \step{BB1}k, \\[1.5mm]
\step{BB1}k, & \ \text{otherwise,}
\end{array}
\right.
\]
where $m = 5$ and $\eta = 0.8$. The $\abbbon$ stepsize is defined in the same way as $\abbmin$, but with an adaptive threshold $\eta$: starting from $\eta_0 = 0.5$, this is updated as
\[
\eta_{k+1} = \left\{
\begin{array}{ll}
0.9\,\eta_k, & \ \text{if} \ \step{BB2}k < \eta_k \, \step{BB1}k, \\[0.5mm]
1.1\,\eta_k, & \ \text{otherwise.}
\end{array}
\right.
\]

Since the performance of these methods depends less on the choice of $m$ than LMSD, we only show $m=5$ for both $\abbmin$ and $\abbbon$. Among many possible stepsize choices, we compare LMSD with these gradient methods because they behave better than classical BB stepsizes on quadratic problems (see, e.g., \cite{FHK23}).

We recall that one $\abbmin$ ($\abbbon$) step requires the computation of three inner products of cost $\calo(n)$ each. An LMSD sweep is slightly more expensive, involving operations of order $m^2n$ and (much less important) $m^3$, but it is performed approximately once every $m$ iterations.
These costs correspond to the decomposition of either $\bG$ or $\bY$, the computation of the projected Hessian matrices and their eigenvalues. We also remark that, while pivoted QR and SVD require $\calo(m^2n)$ operations, the Cholesky decomposition is $\calo(m^3)$, but is preceded by the computation of a Gramian matrix, with cost $\calo(m^2n)$.

We consider two performance metrics: the number of gradient evaluations (NGE) and the computational time. The number of gradient evaluations also includes the iterations that had to be restarted with a Cauchy step (cf.~Algorithm~\ref{algo:lmsd}, Line~9).
Our experience indicates that computational time may depend significantly on the chosen programming language, and therefore should not be the primary choice in the comparison of the methods.
Nevertheless, it is included as an indication, because it takes into account the different costs of an LMSD sweep and $m$ iterations of a gradient method.

The comparison of different methods is made by means of the performance profile \cite{perfprofile}, as it is implemented in Python's library \href{https://pypi.org/project/perfprof/}{{\sf perfprof}}. 
Briefly speaking, the cost of each algorithm per problem is normalized, so that the winning algorithm has cost $1$. This quantity is called \emph{performance ratio}. Then we plot the proportion of problems that have been solved within a certain performance ratio. An infinite cost is assigned whenever a method is not able to solve a problem to the tolerance within the maximum number of iterations.  

We compare the performance of LMSD where the stepsizes are computed as summarized in Table~\ref{tab:lmsd-exp-quad}. 
\begin{table}[htb!]
\centering
\caption{Strategies to compute the new stack of stepsizes in LMSD methods for quadratic functions. RQ refers to the computation of Rayleigh quotients from the harmonic Ritz vectors. H stands for ``harmonic'', the letters G (Y) indicate whether a decomposition has been used to implicitly compute a basis for $\bG$ ($\bY$).}
{\footnotesize
\begin{tabularx}{\textwidth}{lXl}
\toprule
Method & Description & Matrix \\
\midrule
LMSD-G \cite{fletcher2012limited} & Cholesky on $\bG^T \bG$ to compute the inverse Ritz values of $\bA$ & $\bT$ \eqref{eq:fletcherT} \\
LMSD-G-QR & Pivoted QR on $\bG$ to compute the inverse Ritz values of $\bA$ & $\bB^{\rm QR}$ \eqref{eq:qrbb1} \\
LMSD-G-SVD & SVD on $\bG$ to compute the inverse Ritz values of $\bA$ & $\bB^{\rm SVD}$ \eqref{eq:svdbb1-g} \\
LMSD-HG \cite{fletcher2012limited} & $\bA\cdot{\rm span}(\bG)$ to compute the inverse harmonic Ritz values of $\bA$ & $\bT^{-1}\bP$ \eqref{eq:harmfletcher} \\
LMSD-HG-RQ & $\bA\cdot{\rm span}(\bG)$ to find the harmonic Ritz vectors $\bA$ and next compute their inverse Rayleigh quotients (cf.~end of Sec.~\ref{sec:harm}) & $\bT^{-1}\bP$ \eqref{eq:harmfletcher} \\
LMSD-HY & Cholesky on $\bY^T \bY$ to compute the Ritz values of $\bA^{-1}$ & $\bH^{\rm CH}$ \eqref{eq:HYR} \\
 \bottomrule
\end{tabularx}
}
\label{tab:lmsd-exp-quad}
\end{table}
In the first comparison we only consider methods that involve a Cholesky decomposition for simplicity. The Cholesky routine raises an error any time the input matrix is not SPD; therefore no tolerance {\sf thresh} for discarding old gradients needs to be chosen. The performance profiles for this first experiment are shown in Figure~\ref{fig:bbcomparison} for $m \in\{3,5,10\}$, in the performance range $[1,\,3]$. As $m$ increases all methods improve, both in terms of gradient evaluations and computational time. 

\begin{figure}[hb!]
\centering
\includegraphics[width=\textwidth]{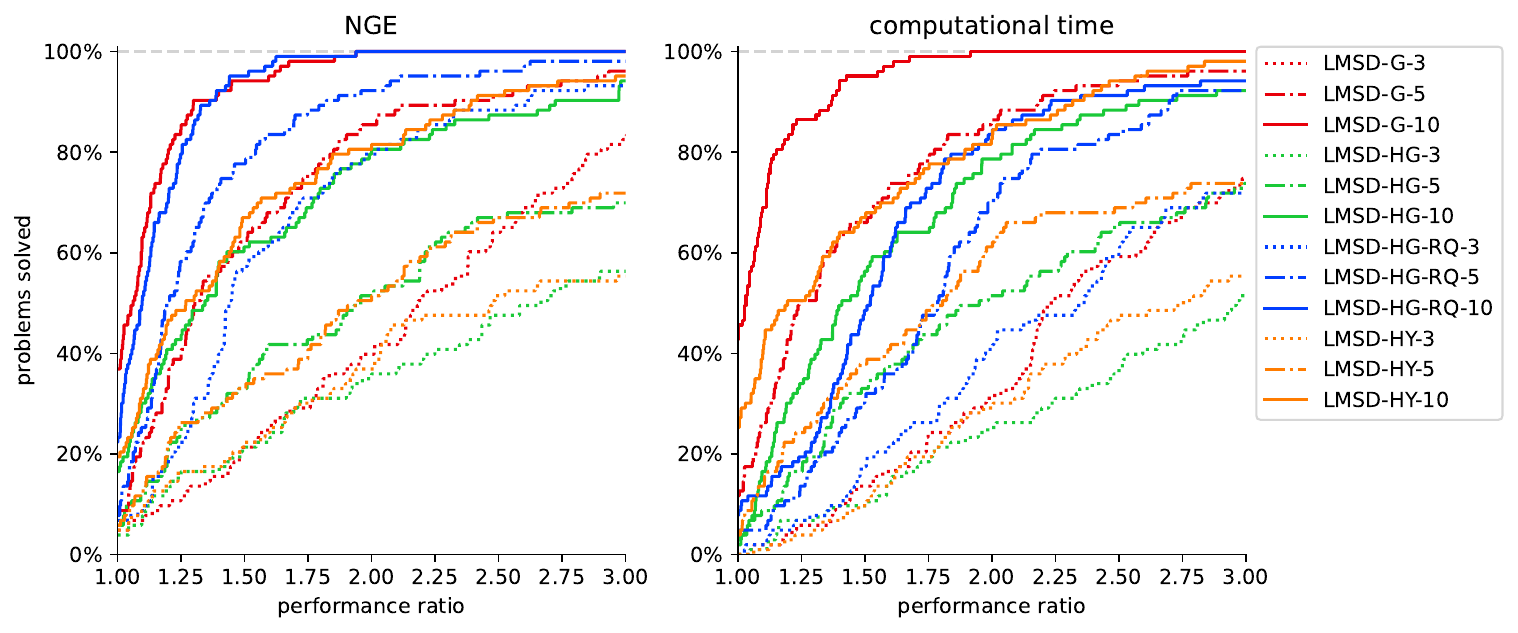}
\caption{Performance profile for strictly convex quadratic problems, based on the number of gradient evaluations (left) and computational time (right). Different line types indicate different values for $m$. Comparison between the computation of Ritz values or harmonic Ritz values.}
\label{fig:bbcomparison}
\end{figure}

The method that performs best, both in terms of NGE and computational time, is LMSD-G for $m = 10$. When $m = 5$, LMSD-HG-RQ performs better than LMSD-G in terms of NGE, but it is more computationally demanding. This is reasonable, since LMSD-HG-RQ has to compute $m$ extra Rayleigh quotients; this operation has an additional cost of $m^3$, which can be relatively large for some problems in our collection, where $m^3 \approx n$. 

LMSD-HG and LMSD-HY perform similarly, since they are two different ways of computing the same harmonic Ritz values. They generally perform worse than the other two methods; in the case $m = 10$, their performances are comparable with those of LMSD-G for $m = 5$.

\begin{figure}[htb!]
\centering
\includegraphics[width=\textwidth]{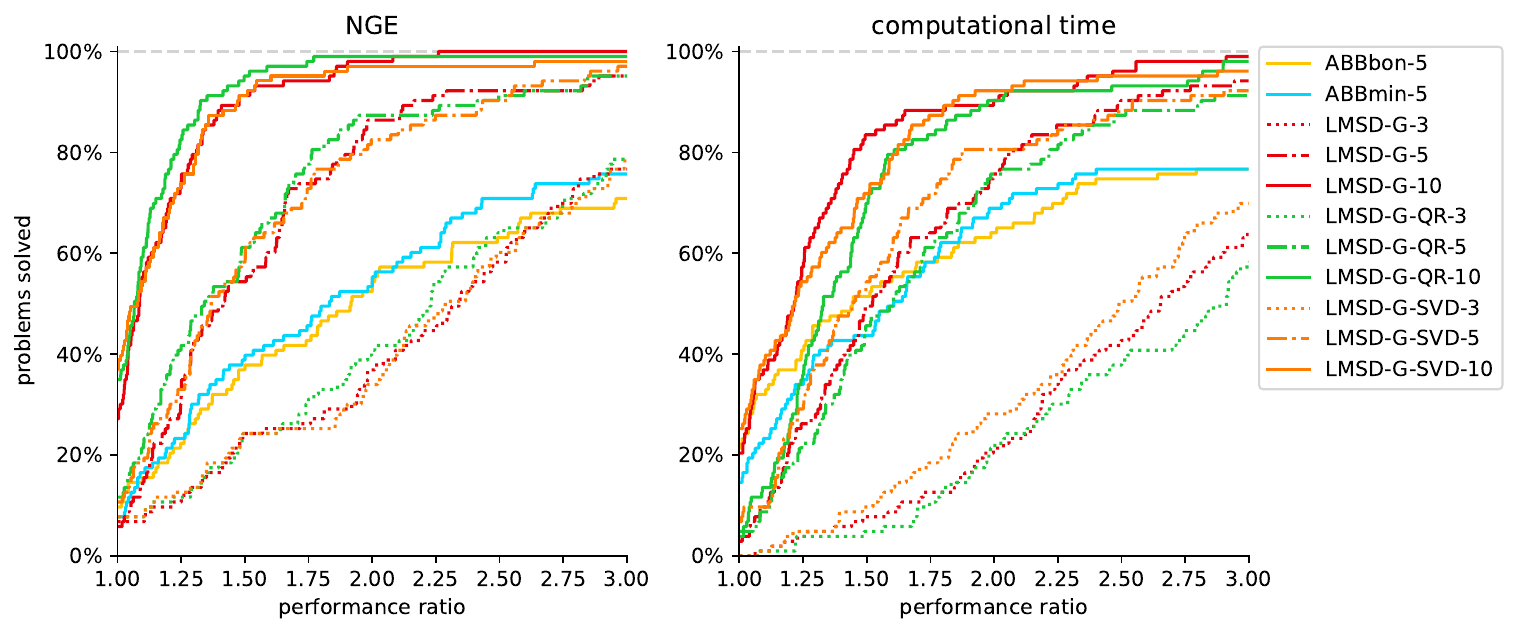}
\caption{Performance profile for strictly convex quadratic problems, based on the number of gradient evaluations (left) and computational time (right). Different line types indicate different values for $m$. Comparison between different decompositions for the matrix $\bG$.}
\label{fig:basescomparison}
\end{figure}

In Figure~\ref{fig:basescomparison} we compare LMSD with $\abbmin$ and $\abbbon$. Given the comments to Figure~\ref{fig:bbcomparison}, we decide to compute the Ritz values of the Hessian matrix, by decomposing $\bG$ in different ways. Specifically, we compare LMSD-G, LMSD-G-QR and LMSD-G-SVD (cf.~Table~\ref{tab:lmsd-exp-quad}). The tolerance to decide the memory size $s \le m$ is set to ${\sf thresh} = 10^{-8}$, for both pivoted QR and SVD. Once more, we clearly see that LMSD improves as the memory parameter increases, both in terms of gradient evaluations and computational time. Once $m$ is fixed, the three methods to compute the basis for $\cals$ are almost equivalent. LMSD-G-SVD seems to be slightly faster than LMSD-G in terms of computational time, as long as the performance ratio is smaller than $1.5$. In our implementation, LMSD-G-QR seems to be more expensive. Compared to $\abbmin$ and $\abbbon$, all LMSD methods with $m = 5,10$ perform better in terms of gradient evaluations. LMSD-G-SVD, for $m=10$, appears to be faster than $\abbmin$ and $\abbbon$, also in terms of computational time.

\begin{figure}[htb!]
\centering
\includegraphics[width=0.65\textwidth]{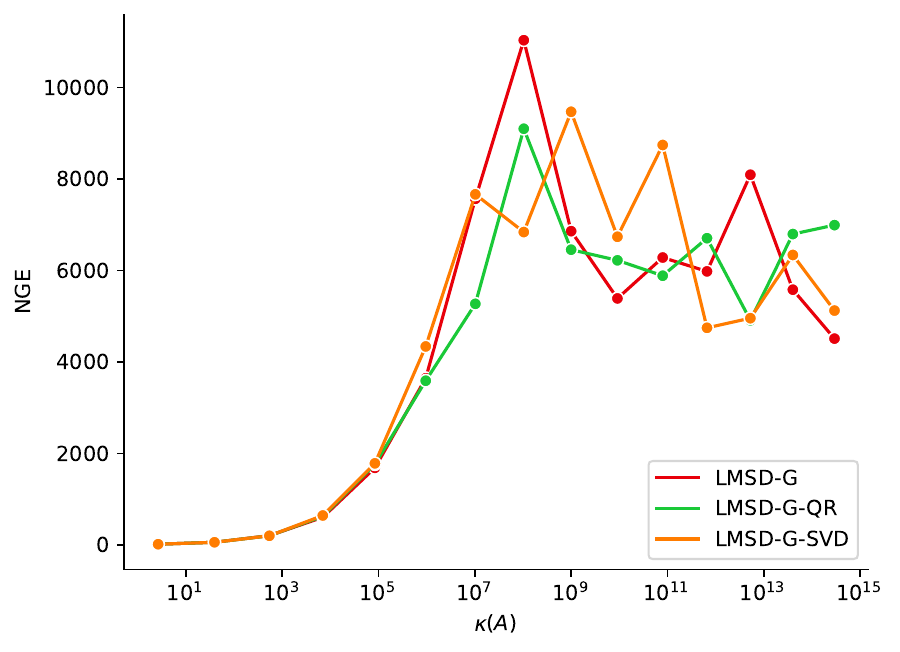}
\caption{Condition number of quadratic problems with Hessian matrix $\bA$ and corresponding number of gradient evaluations. Different colors indicate different ways of computing the Ritz values of $\bA$.}
\label{fig:lmsdqp2}
\end{figure}

Figure~\ref{fig:basescomparison} already suggests that different decompositions give approximately equivalent results. In addition, given a problem, it is difficult to recommend a certain decomposition strategy. We illustrate this idea with the following example: consider a family of $15$ problems with $\bA = \text{diag}(1,\omega,\omega^2, \dots,\omega^{99})$, where $\omega$ assumes $15$ values equally spaced in $[1.01,\,1.4]$. Geometric sequences as eigenvalues are quite frequent in the literature; see, e.g., \cite{fletcher2012limited,daniela2018steplength}. The starting vector is $\bx_0 = \be$, the associated linear system is $\bA\bx = {\bf 0}$; the memory parameter is $m = 5$, and each problem is scaled by the norm of the first gradient, so that ${\sf tol} = 10^{-7}/\|\bg_0\|$. The initial stepsize is $\beta_0 = 0.5$. In Figure~\ref{fig:lmsdqp2}, we plot the condition number of $\bA$ against the number of gradient evaluations. The three methods start to differ already with $\kappa(\bA) \approx 10^5$. For a large condition number, there is no clear winner in the performed experiments.

To summarize, when the objective function is strictly convex quadratic, Ritz values seem preferable over harmonic Ritz values. 
This is emphasized by the improvement of LMSD-HG when taking Rayleigh quotients instead of harmonic Rayleigh quotients. Different decompositions of $\bG$ result in mild differences in the performance of LMSD. Even if Cholesky decomposition is the least stable from 
a numerical point of view, its instability does not seem to have a clear effect on the performance of LMSD.
Finally, we observe that, in all methods, LMSD seems to improve as the memory parameter $m$ increases. 

\subsection{General unconstrained optimization problems}
\label{sec:lmsd-expunc}
In this section we want to assess the performance of LMSD for general unconstrained optimization problems, when we choose different methods to compute the stepsizes. These choices are summarized in Table~\ref{tab:lmsd-exp-unc}. 
\begin{table}[htb!]
\centering
\caption{Strategies to compute the new stack of stepsizes in LMSD methods for general nonlinear functions. H stands for ``harmonic''.}
{\footnotesize
\begin{tabularx}{\textwidth}{lX}
\toprule
Method & Description \\
\midrule
LMSD-CHOL \cite{fletcher2012limited} & Tridiagonalize $\bT$ as in \cite{fletcher2012limited} and compute its inverse eigenvalues \\
LMSD-H-CHOL \cite{curtis2016handling} & Symmetrize $\bP^{-1}\bT$ as in \cite{curtis2016handling} and compute its eigenvalues \\
LMSD-LYA & Inverse eigenvalues of the solution to Prop.~\ref{prop:ls-sym}\,(i) with Cholesky of $\bG^T\bG$ to handle rank deficiency \\
LMSD-LYA-QR & Idem with pivoted~QR of $\bG$ to handle rank deficiency \\
LMSD-LYA-SVD & Idem with SVD of $\bS$ to handle rank deficiency \\
LMSD-H-LYA & Eigenvalues of the solution to Prop.~\ref{prop:ls-sym}\,(ii) with Cholesky of $\extmat{\bG}{\bg_{m+1}}^T\extmat{\bG}{\bg_{m+1}}$ to handle rank deficiency \\
LMSD-PERT & Perturb $\bY$ according to \cite{schnabel1983quasi} to get \eqref{eq:lmsd-pert} and compute its inverse eigenvalues \\
 \bottomrule
\end{tabularx}
}
\label{tab:lmsd-exp-unc}
\end{table}
All the methods presented in Section~\ref{sec:genf} are considered, along with the extension of the harmonic Ritz values computation to the general unconstrained case. This is explained in \cite{curtis2016handling}, and indicated as LMSD-H-CHOL. In the quadratic case, the authors point out that the matrix $\bP$ \eqref{eq:fletcherP} can be expressed in terms of $\bT$ as $\bP = \bT^T \bT + \bxi\bxi^T$, where $\bxi^T = [{\bf 0 }^T\ \rho]\,\bJ\bR^{-1}$. Then, if $\wt \bT$ is the tridiagonal symmetrization of $\bT$ as in LMSD-CHOL, the new $\bP$ is defined as $\wt \bP = \wt \bT^T\wt \bT + \bxi\bxi^T$. The new stepsizes are the eigenvalues of $\wt \bP^{-1}\wt \bT$, and are real since $\wt \bP$ is generically SPD, and $\wt \bT$ is symmetric. 

All the LMSD methods are tested against the gradient method with nonmonotone line search \cite{raydan1997barzilai}. The stepsize choice is again $\abbmin$ with $m = 5$. The nonmonotone line search features a memory parameter $M = 10$; negative stepsizes are replaced by $\beta_k = \max(\min(\|\bg_k\|^{-1}, \, 10^{5}), \ 1)$, as in \cite{raydan1997barzilai}. In both algorithms, we set $\beta_{\min} = 10^{-30}$, $\beta_{\max} = 10^{30}$, $c_{\rm{ls}} = 10^{-4}$, $\sigma_{\rm{ls}} = \frac12$, and $\beta_{0} = \|\bg_0\|^{-1}$. The routine stops when $\|\bg_k\| \le {\sf tol}\cdot \|\bg_0\|$, with ${\sf tol} = 10^{-6}$, or when $10^5$ iterations are reached. In LMSD, the memory parameter has been set to $m \in \{3,5,7\}$.

We take 31 general differentiable functions from the CUTEst collection \cite{pycutest2022,cutest} and the suggested starting points $\bx_0$ therein. 
The problems are reported in Table~\ref{tab:uncpbs}. Since some test problems are non-convex, we checked whether all gradient methods converged to the same stationary point for different methods. As the performance profile, we may consider three different costs: the number of function evaluations (NFE), the number of iterations, and the computational time. The number of iterations coincides with the number of gradient evaluations for LMSD, $\abbmin$ and $\abbbon$.  
\begin{table}[htb!]
\centering
\caption{Problems from the CUTEst collection and their sizes.} 
\label{tab:uncpbs}
{\footnotesize
\begin{tabular}{lrlrlr}
\toprule
 Problem & $n$ & Problem & $n$ & Problem & $n$ \\
\midrule
ARGTRIGLS &   200 &   EIGENBLS &   110 &   MOREBV &  5000 \\
 CHNROSNB &    50 &   EIGENCLS &   462 & MSQRTALS &   529 \\
  COATING &   134 &   ERRINROS &    50 & MSQRTBLS &   529 \\
   COSINE & 10000 &   EXTROSNB &  1000 & NONCVXU2 & 10000 \\
DIXMAANE1 &  3000 &   FLETCHCR &  1000 & NONCVXUN & 10000 \\
 DIXMAANF &  9000 &   FMINSURF &  1024 & NONDQUAR & 10000 \\
 DIXMAANG &  9000 &   GENHUMPS &  5000 & SPMSRTLS & 10000 \\
 DIXMAANH &  9000 &    GENROSE &   500 & SSBRYBND &  5000 \\
 DIXMAANJ &  9000 & LUKSAN11LS &   100 & TQUARTIC &  5000 \\
 DIXMAANK &  9000 & LUKSAN21LS &   100 & & \\
 EIGENALS &   110 &   MODBEALE &  2000 & & \\
\bottomrule
\end{tabular}
}
\end{table}

Before comparing LMSD methods with the other gradient methods, we discuss the following two aspects of LMSD: the use of different decompositions of either $\bG$ or $\bS$ in LMSD-LYA, which has been presented in Section~\ref{sec:lya-ld}; the number of steps per sweep that are actually used by each LMSD method, in relation with the chosen memory parameter $m$. 

{\bf Different decompositions in LMSD-LYA.} In the quadratic case, we notice that there is not much difference between the listed decompositions to compute a basis for $\cals$. We repeat this experiment with LMSD-LYA, for general unconstrained problems, because the Hessian matrix is not constant during the iterations and therefore the way we discard the past gradients might be relevant. We recall that Cholesky decomposition (LMSD-LYA) discards the oldest gradients first, pivoted QR (LMSD-LYA-QR) selects the gradients in a different order; SVD (LMSD-LYA-SVD) takes a linear combination of the available gradients. For the last two methods, the tolerance to detect linear dependency is set to ${\sf thresh = 10^{-8}}$. 

Figure~\ref{fig:lya} shows the three decompositions for $m = 5$. Memory parameters $m=3,\,7$ are not reported as they are similar to the case $m = 5$. The conclusion is the same as for the quadratic case: the decomposition method does not seem to have a large impact on the performance of LMSD. However, for the general case, we remark that while LMSD-LYA solved all problems, both LMSD-LYA-QR and LMSD-LYA-SVD fail to solve one problem each, for all the tested memory parameters. In addition, LMSD-LYA seems more computationally efficient than the other methods. For these two reasons, we continue our analysis by focusing on Cholesky decomposition only. 

\begin{figure}[htb!]
 \centering
 \includegraphics[width=\textwidth]{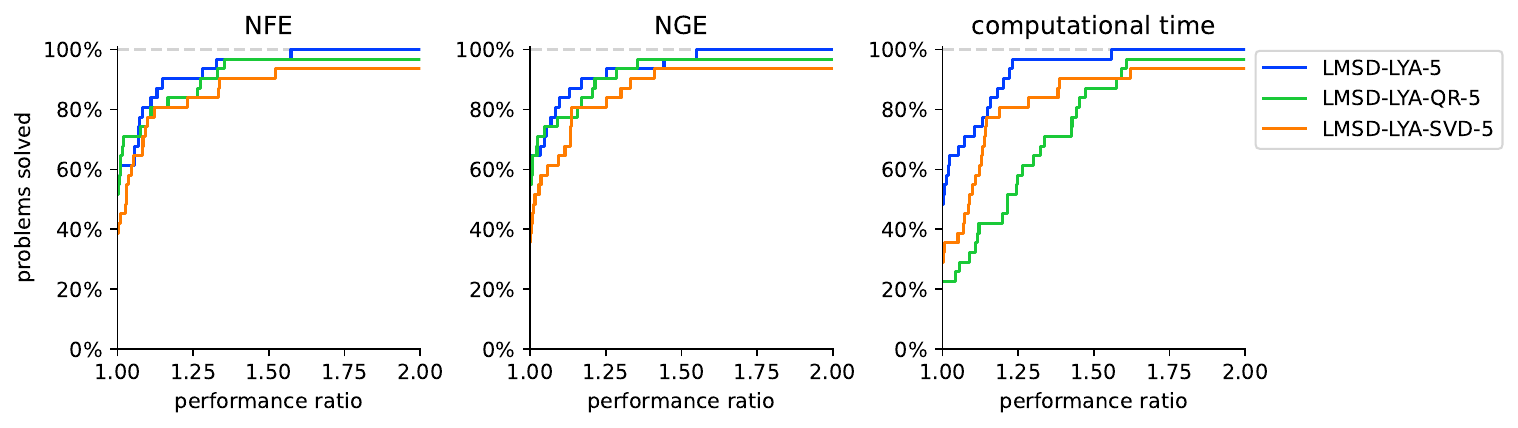}
 \caption{Performance profile for general unconstrained problems, based on the number of function evaluations, gradient evaluations, and computational time. Comparison between different decompositions for the matrix $\bG$ (or $\bS$) and $m = 5$.}
 \label{fig:lya}
\end{figure}

{\bf Average number of stepsizes per sweep.}
We quantify the efficiency of the various LMSD methods as follows. Ideally, each sweep should provide $m$ new stepsizes, which are supposed to be used in the next $m$ iterations. 
However, because of the algorithm we adopted, less than $m$ stepsizes are actually employed before the stack is cleared. For each problem and method, we compute the ratio between the number of iterations and the number of sweeps. This gives the average number of stepsizes that are used in each sweep. This value is in $[1,m]$, where the memory parameter
$m$ indicates the ideal situation where all the steps are used in a sweep. A method that often uses less than $m$ stepsizes might be inefficient, since the effort of computing new stepsizes (of approximately $\calo(m^2n)$ operations) is not entirely compensated. 

The number of iterations per sweep is shown in Figure~\ref{fig:sweeps} as a distribution function over the tested problems. An ideal curve should be a step function with a jump in $m$. For example, when $m = 3$, LMSD-CHOL, i.e., Fletcher's method, tends to use $3$ stepsizes on average for approximately $80\%$ of the problems; this is close to the desired situation. When $m = 5$, we notice that LMSD-H-LYA and LMSD-LYA have a similar behavior but for an average smaller than $5$. In all cases, LMSD-H-LYA is the curve that shows the lowest average number of steps per sweep. Another interesting behavior is the one of LMSD-PERT, which, for some problems, approaches the largest value $m$, but, for many others, shows a lower average. In $m=5,7$, more than $50\%$ of problems are solved by using only half of the available stepsizes per sweep. This behavior was reflected by the performance profiles of LMSD-PERT: while going from $m=5$ to $m=7$, we observed an improvement in terms of the number of function evaluations, but a deterioration in the computational time.

As $m$ increases, the deterioration of the average number of stepsizes per sweep is also visible for the other methods. As already remarked by \cite{fletcher2012limited,daniela2018steplength}, this suggests that choosing a large value for $m$ does not improve the LMSD methods for general unconstrained problems. This is in contrast with what we have observed in the quadratic case. 

\begin{figure}[htb!]
 \centering
 \includegraphics[width=\textwidth]{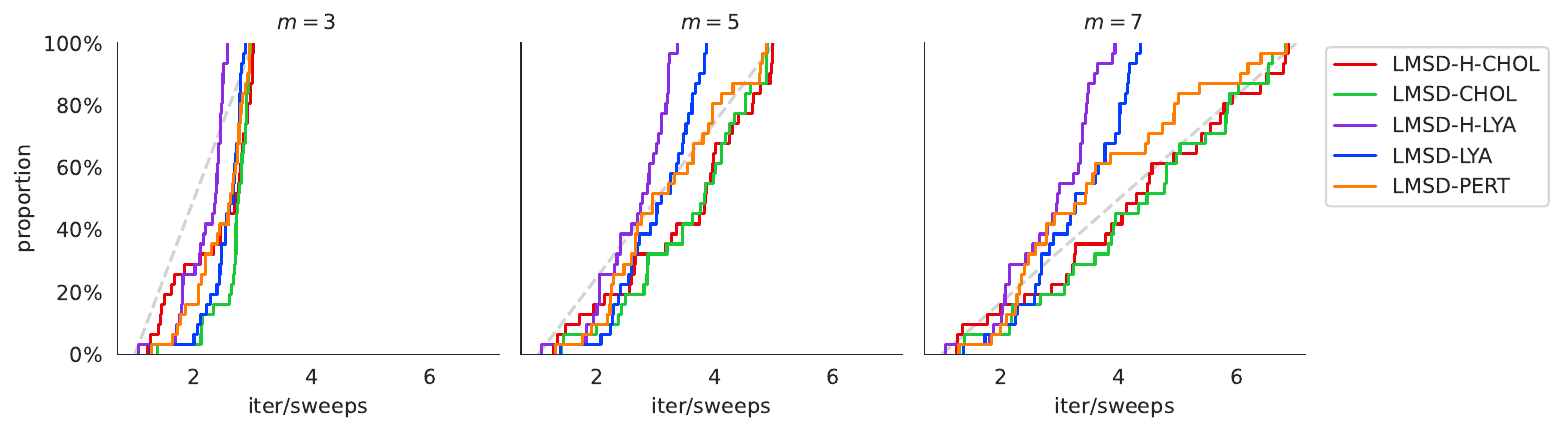}
 \caption{Cumulative distribution function of the number of iterations per sweep, i.e., the average number of stepsizes per sweep. Curves are based on the tested problems. Straight dashed lines indicate the uniform distribution over $[1,m]$.}
 \label{fig:sweeps}
\end{figure}

\begin{figure}[htb!]
 \centering
 \includegraphics[width=\textwidth]{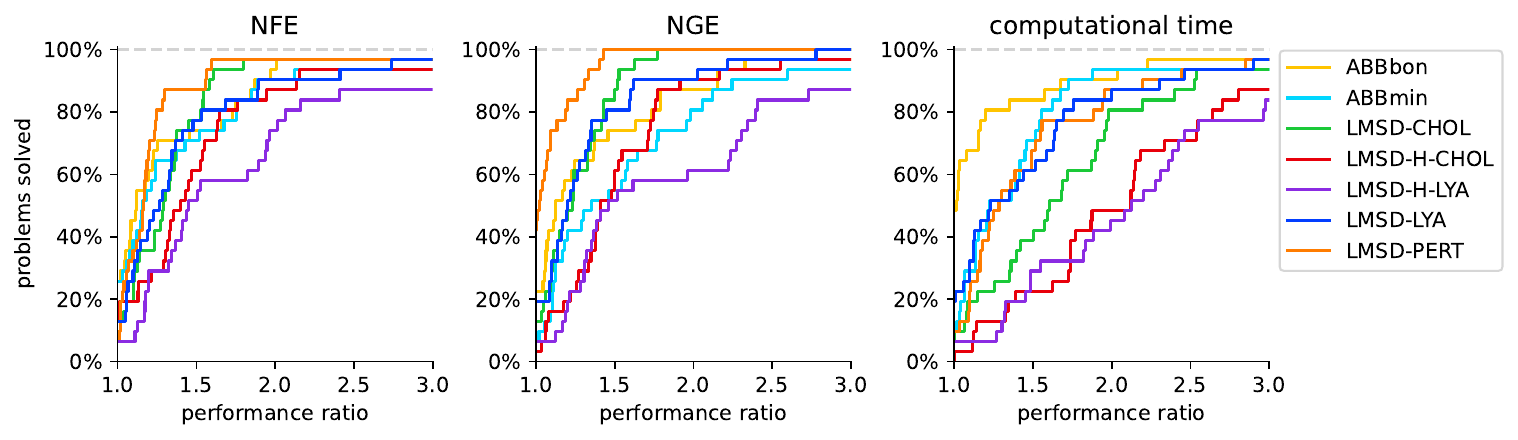}
 \caption{Performance profile for general unconstrained problems, based on the number of function evaluations, gradient evaluations, and computational time. Comparison between different ways to compute the new stepsizes of a sweep in LMSD, and the gradient method with nonmonotone line search and with either the $\abbmin$ or the $\abbbon$ step.}
 \label{fig:lmsdunc}
\end{figure}

{\bf Comparison with gradient methods.} In what follows, we consider only $m=5$ for the comparison with $\abbmin$ and $\abbbon$. For LMSD, we do not include $m = 3$ because it showed poorer results compared to the simpler nonmonotone gradient method. LMSD for $m = 7$ is not considered since it gives performances similar to $m = 5$, but with a higher computational cost. Results are shown in Figure~\ref{fig:lmsdunc}. From the performance profiles related to the computational time, we see that $\abbbon$ solves a high proportion of problems with the minimum computational time.

Regarding the performance profiles for both NFE and NGE, we note that LMSD-PERT has the highest curve; LMSD-LYA and LMSD-CHOL almost overlap for a performance ratio smaller than $1.5$; after that, the two curves split, and LMSD-CHOL reaches LMSD-PERT.

The LMSD-PERT method solves $42\%$ of the problems with the minimum number of gradient evaluations. By looking at the performance profile for the NFE, this fact does not seem to be complemented by a low number of function evaluations. Intuitively, this means that LMSD-PERT enters the backtracking procedure more often than the other methods, and it reflects what we have also observed in the central plot of Figure~\ref{fig:sweeps}. Any time we enter the backtracking procedure, the stack of stepsizes is cleared and the sweep is terminated. Then, the more backtracking we need, the smaller the number of stepsizes per sweep we use. 

LMSD-H-CHOL and LMSD-H-LYA, the ``harmonic" approaches, perform a little bit worse than the other methods: while LMSD-H-CHOL can still compete with $\abbmin$ and $\abbbon$ in terms of NFE and NGE, it performs worse in terms of computational time. 
LMSD-H-LYA performs generally worse than the other techniques; Figure~\ref{fig:sweeps} was already suggesting the poorer quality of the stepsizes of LMSD-H-LYA, which often need backtracking or lead to an increasing gradient norm. 

To complete the picture, Table~\ref{tab:lmsd5} reports two important quantities related to the performance profile: the proportion of problems solved by each method, and the proportion of problems solved with minimum cost, which is not always clearly visible from Figure~\ref{fig:lmsdunc}. We notice that $\abbmin$ and LMSD-H-LYA fail to solve one of the 31 tested problems. The $\abbbon$ stepsize solves $26\%$ of problems with minimum NFE and $48\%$ of problems with minimum computational time. LMSD-PERT wins in terms of NGE. The proportion of problems solved with minimum NFE is the same for LMSD-CHOL, LMSD-LYA, and LMSD-H-CHOL. 

\begin{table}[htb!]
\centering
\caption{For each method, we report the proportion of solved problems and the proportion of problems solved at minimum cost (performance ratio equal to 1) for different performance measures. The memory parameter is $m = 5$ for all the LMSD methods.}
\label{tab:lmsd5}
{\footnotesize
\begin{tabular}{lrrrr}
\toprule
Method & Solved (\%) & \multicolumn{3}{c}{PR = 1 (\%)} \\
 \ & \ & NFE & NGE & Time \\
\midrule
  $\abbbon$ &        1.00 &        0.26 & 0.23 & 0.48 \\
  LMSD-CHOL &        1.00 &        0.13 & 0.13 & 0.06 \\
LMSD-H-CHOL &        1.00 &        0.13 & 0.03 & 0.00 \\
   LMSD-LYA &        1.00 &        0.13 & 0.19 & 0.19 \\
  LMSD-PERT &        1.00 &        0.06 & 0.42 & 0.10 \\
  $\abbmin$ &        0.97 &        0.26 & 0.06 & 0.10 \\
 LMSD-H-LYA &        0.97 &        0.06 & 0.06 & 0.06 \\
\bottomrule
\end{tabular}
}
\end{table}

\section{Conclusions} \label{sec:con}
We have reviewed the limited memory steepest descent method proposed by Fletcher \cite{fletcher2012limited}, for both quadratic and general nonlinear unconstrained problems. In the context of strictly convex quadratic functions, we have explored pivoted QR and SVD as alternative ways to compute a basis for either the matrix $\bG$ (Ritz values) or $\bY$ (harmonic Ritz values). We have also proposed to improve the harmonic Ritz values by computing the Rayleigh quotients of their corresponding harmonic Ritz vectors.

Experiments in Section~\ref{sec:lmsd-expquad} have shown that the type of decomposition has little influence on the number of iterations of LMSD. The choice between Cholesky decomposition, pivoted QR and SVD is problem dependent. These three methods may compete with the $\abbmin$ and $\abbbon$ gradient methods.

The experiments also suggest that a larger memory parameter improves the performance of LMSD, and Ritz values seem to perform better than harmonic Ritz values. The modification of the harmonic Ritz values (Section~\ref{sec:harm}) effectively improves the number of iterations, at the extra expense of (relatively cheap) $\calo(m^3)$ work.

In the context of general nonlinear functions, we have given a theoretical foundation to Fletcher's idea \cite{fletcher2012limited} (LMSD-CHOL), by connecting the symmetrization of $\bT$ \eqref{eq:fletcherT} to a perturbation of $\bY$. We have proposed another LMSD method (LMSD-PERT) based on a different perturbation given by Schnabel \cite{schnabel1983quasi} in the area of quasi-Newton methods. An additional modification of LMSD for general functions (LMSD-LYA) has been obtained by adding symmetry constraints to the secant condition of LMSD for quadratic functions. The solution to this problem coincides with the solution to a Lyapunov equation.

In Section~\ref{sec:lmsd-expunc}, experiments on general unconstrained optimization problems have shown that, in contrast with the quadratic case, increasing the memory parameter does not necessarily improve the performance of LMSD. This may also be related to the choices made in Algorithm~\ref{algo:lmsd-unconstrained}, such as the sufficient decrease condition or the criteria to keep or discard old gradients. 

Given a certain memory parameter, the aforementioned LMSD methods seem to perform equally well in terms of the number of function evaluations and computational time. They all seem valid alternatives to the nonmonotone gradient method based on either $\abbmin$ or $\abbbon$ stepsizes, with the caveat that LMSD-PERT and LMSD-LYA tend to not exploit all the stepsizes computed in a sweep, more often than LMSD-CHOL. 

A Python code for the LMSD methods and the nonmonotone $\abbmin$ and $\abbbon$ is available at \href{https://github.com/gferrandi/lmsdpy}{github.com/gferrandi/lmsdpy}.

\medskip\noindent
\section{Declarations}
\subsection{Ethical approval}
Not applicable.
\subsection{Availability of supporting data}
The data used during the current study are available in the SuiteSparse Matrix Collection repository, \href{https://sparse.tamu.edu/}{sparse.tamu.edu}.
\subsection{Competing interests}
The authors have no competing interests to declare that are relevant to the content of this article.
\subsection{Funding}
This work has received funding from the European Union's Horizon 2020 research and innovation programme under the Marie Sklodowska-Curie grant agreement No 812912. 
\subsection{Authors' contributions}
All the authors equally contributed to all the manuscript parts.
\subsection{Acknowledgments}
We are grateful to the referees for their very useful suggestions which considerably improved the quality of the paper. We would also like to thank Nata\v{s}a Kreji\'c for the inspiring discussions on gradient methods.


\bibliography{references}
\end{document}